\newtheorem{theorem}{Theorem}[section]
\newtheorem*{theorem*}{Theorem}
\theoremstyle{corollary}
\newtheorem{corollary}{Corollary}[theorem]
\newtheorem*{corollary*}{Corollary}
\theoremstyle{remark}
\newtheorem{remark}[theorem]{Remark}
\newtheorem*{remarks*}{Remarks}
\newtheorem{example}{\bf Example}[section]
\theoremstyle{proposition}
\newtheorem{proposition}[theorem]{Proposition}
\numberwithin{equation}{subsection}
\begin{document}
	
	\title{Differential Galois groups of differential central simple algebras and their projective representations}
	\author{Manujith K. Michel \and Varadharaj R. Srinivasan}
	
	\maketitle
\begin{abstract}
	Let $F$ be a $\delta-$field (differential field) of characteristic zero with an  algebraically closed field of constants $F^\delta$,  $A$ be a $\delta-F-$central simple algebra, $K$ be a Picard-Vessiot extension for the $\delta-F-$module $A$ and $\mathscr G(K|F)$ be the $\delta-$Galois group of $K$ over $F.$  We prove that a $\delta-$field extension $L$ of $F,$ having $F^\delta$ as its field of constants, splits the $\delta-F-$central simple algebra $A$ if and only if the $\delta-$field $K$ embeds in $L.$ 
	We then extend the theory of  $\delta-F-$matrix algebras over a $\delta-$field $F,$ put forward by Magid \& Juan (\cite{JM08}), to arbitrary $\delta-F-$central simple algebras.  In particular,  we  establish a natural bijective correspondence between the isomorphism classes of $\delta-F-$central simple algebras of dimension $n^2$ over $F$ that are split  by the $\delta-$field $K$ and the classes of  inequivalent representations of the algebraic group $\mathscr G(K|F)$ in $\mathrm{PGL}_n(F^\delta).$ We show that  $\mathscr G(K|F)$ is a reductive or a solvable algebraic group if and only if $A$ has certain kinds of $\delta-$right ideals.   \end{abstract}

\section{Introduction} All rings considered in this paper are unital and of characteristic zero.      A \emph{differential ring} or a \emph{$\delta-$ring} $A$ is a pair $(A, \delta_A),$ where $A$ is  a ring and  $\delta_A: A\to A$ is an additive map such that $\delta_A(xy)=\delta_A(x)y+x\delta_A(y)$ for all $x, y \in A.$ A \emph{$\delta-$right ideal} $I$ of a $\delta-$ring $A$ is a  right ideal of $A$ such that $\delta_A(x)\in I$ for all $x\in I.$ A \emph{$\delta-$ring homomorphism} $\phi: (A,\delta_A)\to (B, \delta_B)$ is a unital ring homomorphism such that $\phi(\delta_A(a))=\delta_B(\phi(a))$ for all $a\in A.$ A $\delta-$ring $B$ is said to be a \emph{$\delta-$ring extension} of a $\delta-$ring $A$ if  there is an injective $\delta-$ring homomorphism from $A$ to $B.$ Let $A$ be a $\delta-$ring. By a \emph{differential $A-$module} or a \emph{$\delta-A-$module} $M,$  we mean a pair $(M, \delta_M),$ where $M$ is a  finitely generated unital $A-$module  with an additive map $\delta_M: M\to M$ such that  $\delta_M(ax)=\delta_A(a)x+a\delta_M(x),$ for all $a\in  A$ and $x\in M.$ For a $\delta-A-$module $M,$ we denote $\mathrm{ker}(\delta_M)$ by $M^{\delta}.$ Note that $A$  is a $\delta-A-$module, $A^\delta$ is a subring of $A$, $A^\delta$ is a field whenever $A$ is a field and that  $M^\delta$ is an $A^\delta-$module.    

Let $F$ be a $\delta-$field.  A $\delta-F-$module $M$ is said to be  \emph{trivial}   if $\mathrm{dim}_F(M)= \mathrm{dim}_{F^{\delta}}(M^\delta).$  
An $F-$algebra $A$ is said to be a \emph{$\delta-F-$ algebra} if $A$ is a $\delta-$ring  extension of $F.$ Note that a $\delta-F-$algebra $A$ with $\mathrm{dim}_F(A)<\infty$ is also a $\delta-F-$module. A $\delta-F-$ homomorphism between $\delta-F-$ algebras is an $F-$algebra homomorphism which is also a $\delta-$homomorphism of rings. A \emph{differential $F-$central simple algebra} or a  $\delta-F-$central simple algebra  $A$ is a $\delta-F-$algebra $A$ which is also a central simple $F-$algebra.  The central simple algebra $\mathrm{M}_n(F)$  has the natural derivation $\delta^c,$ called the \emph{coordinate-wise derivation}, defined by $\delta^c(a_{ij}):=(\delta_F(a_{ij}))$ for $(a_{ij})\in \mathrm{M}_n(F).$ Furthermore, the $\delta-$ring $(\mathrm{M}_n(F), \delta^c)$ is a  $\delta-F-$central simple algebra.

Let $A$ be a $\delta-F-$ central simple algebra and $K$ be a $\delta-$field extension of $F.$  Then,  the $\delta-$ring $(A\otimes_F K, \delta_A\otimes \delta_K),$ where the derivation $\delta_A\otimes \delta_K$  maps $a\otimes k$ to  $\delta_A(a)\otimes  k+ a\otimes \delta_K(k),$ is  a $\delta-K-$central simple algebra.  The $\delta-$field $K$ is said to \emph{split} the $\delta-F-$ central simple algebra $A$  if there is a  $\delta-K-$algebra isomorphism between   $\left(A\otimes_F K, \delta_A\otimes \delta_K \right)$ and  $\left(\mathrm{M}_n(K), \delta^c\right).$  

Before we proceed to state the main results of this paper, we shall state a few more definitions and facts from Picard-Vessiot theory. A $\delta-$field extension $K$ of $F$ is said to be a \emph{Picard-Vessiot extension}\footnote{If $F^{\delta}$  is an algebraically closed field, for any given $\delta-F-$ module,  Picard-Vessiot extensions exist and are unique up to $\delta-F-$isomorphisms.} for a $\delta-F-$module $M$ if \begin{enumerate}\item $K^{\delta}=\{c\in K\ |\ \delta_K(c)=0\}=F^{\delta}.$ \item  For any $F-$basis $e_1,\dots, e_n$ of $M$ and a matrix $(a_{ij})\in \mathrm{M}_n(F)$ such that $\delta_M(e_i)=-\sum^n_{j=1}a_{ji}e_j,$ there is a matrix $(k_{ij})\in\mathrm{GL}_n(K)$ such that $\delta^c(k_{ij})=(a_{ij})(k_{ij}).$ \item $K$ is generated as a field over $F$ by the entries $k_{ij}.$\end{enumerate}

If $\omega=\mathrm{det}(k_{ij})$ then the $F-$subalgebra $\mathscr R\subseteq K$ generated by $\{k_{ij}\ | \ 1\leq i, j\leq n\}$ and $\omega^{-1}$ is called the \emph{Picard-Vessiot ring}  for the $\delta-F-$module $A.$ The field of fractions of $\mathscr R$ is $K$ and $\mathscr R$ is integrally closed (in $K$).
Let  $K$ be a Picard-Vessiot extension for the  $\delta-F-$module $M.$
The group of all  $\delta-$automorphisms of $K$ that fixes the elements of $F$ is called the \emph{differential Galois group} or the \emph{$\delta-$Galois group} of $K$ over $F$ or equivalently, the $\delta-$Galois group  of the $\delta-F-$module $M$ and it is denoted by $\mathscr{G}(K|F).$

 For any $\sigma\in \mathscr{G}(K|F),$ since $\sigma(a_{ij})=a_{ij},$ it is easily seen that $$(k_{ij})^{-1}(\sigma(k_{ij}))=:(c_{ij\sigma})\in\mathrm{GL}_n(F^{\delta})$$ and that $\phi: \mathscr{G}(K|F)\to\mathrm{GL}_n(F^{\delta});$ $\phi(\sigma)= (c_{ij\sigma})$ is a faithful group homomorphism. In fact, the image of $\phi$ is a Zariski closed subgroup of $\mathrm{GL}_n(F^{\delta})$ and  therefore the $\delta-$Galois group can be naturally  identified with the $F^{\delta}-$points of a linear algebraic group defined over $F^{\delta}.$ The group $\mathscr G(K|F)$ stabilizes the  Picard-Vessiot ring $\mathscr R$ and $\mathrm{maxspec}(\mathscr R)$ is a torsor of $\mathscr G(K|F)$ over $F.$

We now state the main results of this paper
\begin{theorem}\label{Mainresult} Let $F$ be a $\delta-$field, $F^\delta$ be algebraically closed  and $A$ be a $\delta-F-$central simple algebra. Let $L$ be a $\delta-$field extension of $F$ with $F^{\delta}=L^{\delta}$ and $K$ be a Picard-Vessiot extension for the $\delta-F-$module $A.$ 
	
	\begin{enumerate}[(i)] \item  \label{split-trivial-equivalence}   The $\delta-$field $L$ splits the $\delta-F-$algebra $A$ if and only if $(A\otimes_F L, \delta_A\otimes \delta_L)$ is a trivial $\delta-L-$module.
		\item  \label{existence-minimal} The $\delta-$field  $L$ splits the $\delta-F-$algebra $A$ if and only if the Picard-Vessiot extension $K$ embeds  into the $\delta-$field $L.$
		 
		\item \label{reductivecase} The $\delta-$Galois group $\mathscr{G}(K|F)$ is  reductive  if and only if $A=\oplus^l_{i=1}\mathfrak{a}_i,$ where each $\mathfrak{a}_i$ is minimal among all $\delta-$right ideals of $A.$ 
		
		\item \label{liouvilliancase} Let $F^0$ be the algebraic closure of $F$ in $K.$ Then, the identity component of $\mathscr{G}(K|F)$ is  solvable if and only if $A\otimes_F F^0$ is a split $F^0-$algebra, that is, $A\otimes_F F^0\cong \mathrm{M}_n(F^0)$ as $F^0-$algebras and that there is a chain of $\delta-$right ideals $$\mathfrak a_1\subsetneq \mathfrak a_2\subsetneq\cdots\subsetneq \mathfrak a_n=A\otimes_F F^0$$ of $A\otimes_F F^0$ such that $\mathrm{dim}_{F^0}(\mathfrak a_j)=jn.$ 
	\item \label{maxstable} If $A$ has a  maximal subfield that is stabilized by $\delta_A$  then  the identity component of $\mathscr{G}(K|F)$ is a torus.
	
\item \label{trivialtorsor}If the Picard-Vessiot ring $\mathscr R\subseteq K$ is a trivial torsor of $\mathscr G(K|F)$ over $F$ then $A$ must be a split $F-$algebra.
\end{enumerate}
		\end{theorem}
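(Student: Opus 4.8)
The plan is to reduce the assertion to the triviality of the Brauer class of $A$ over the Picard-Vessiot ring $\mathscr R$, and then specialise along an $F-$rational point of $\mathrm{maxspec}(\mathscr R)$. For the first step: since $K$ is a Picard-Vessiot extension for the $\delta-F-$module $A$, the defining fundamental matrix exhibits $A\otimes_F K$ as a trivial $\delta-K-$module, and because $K^\delta=F^\delta$ part~(i) of the theorem applies and shows that $K$ splits $A$. In particular $A\otimes_F K\cong\mathrm{M}_n(K)$ as $K-$algebras, so the class $[A\otimes_F K]$ in the Brauer group $\mathrm{Br}(K)$ is trivial.

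Next I would descend this triviality to $\mathscr R$. Since $A$ is central simple over $F$, $A\otimes_F\mathscr R$ is an Azumaya algebra over the commutative ring $\mathscr R$, and its class in $\mathrm{Br}(\mathscr R)$ restricts to $0$ under the natural map $\mathrm{Br}(\mathscr R)\to\mathrm{Br}(K)$ induced by $\mathscr R\hookrightarrow K=\mathrm{Frac}(\mathscr R)$. The crucial point is that $\mathscr R$ is a regular integral domain: by the facts recalled above, $\mathrm{maxspec}(\mathscr R)$ is a torsor of the smooth affine algebraic group $\mathscr G(K|F)$ over $F$, hence is smooth over $F$ (torsors under smooth affine group schemes are smooth), and a scheme smooth over a field is regular. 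By the Auslander--Goldman theorem, the restriction $\mathrm{Br}(\mathscr R)\to\mathrm{Br}(K)$ is injective for a regular integral domain $\mathscr R$; therefore $A\otimes_F\mathscr R$ represents the trivial class, i.e.\ $A\otimes_F\mathscr R\cong\mathrm{End}_{\mathscr R}(P)$ for some finitely generated projective $\mathscr R-$module $P$, which has constant rank $n$ because $\mathrm{Spec}(\mathscr R)$ is connected.

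Finally I would invoke the hypothesis: that $\mathscr R$ is a trivial torsor of $\mathscr G(K|F)$ over $F$ means precisely that $\mathrm{maxspec}(\mathscr R)$ has an $F-$rational point, i.e.\ that there is an $F-$algebra homomorphism $\varepsilon\colon\mathscr R\to F$. Applying $-\otimes_{\mathscr R,\varepsilon}F$ to $A\otimes_F\mathscr R\cong\mathrm{End}_{\mathscr R}(P)$ gives
\[
A\;\cong\;(A\otimes_F\mathscr R)\otimes_{\mathscr R,\varepsilon}F\;\cong\;\mathrm{End}_F\!\big(P\otimes_{\mathscr R,\varepsilon}F\big)\;\cong\;\mathrm{M}_n(F)
\]
as $F-$algebras, since $P\otimes_{\mathscr R,\varepsilon}F$ is an $n-$dimensional $F-$vector space; thus $A$ is a split $F-$algebra. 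Note that $\varepsilon$ need not be compatible with the derivations, so this argument does not assert that $F$ splits the $\delta-$algebra $A$ in the sense of the introduction---indeed by part~(ii) the latter would force $K=F$.

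The step I expect to be the main obstacle is the descent $\mathrm{Br}(\mathscr R)\hookrightarrow\mathrm{Br}(K)$: what makes it work is the \emph{regularity} of the Picard-Vessiot ring, which comes from its torsor structure, rather than merely the integral closedness of $\mathscr R$ recorded earlier (that alone would give injectivity only after localising at height-one primes, and would then require a purity argument). If one prefers not to invoke the Auslander--Goldman theorem, an alternative is to produce $P$ directly, as a suitable $A\otimes_F\mathscr R-$stable $\mathscr R-$lattice inside the simple left $A\otimes_F K-$module $K^n$, using that $\mathscr R$ is a normal Noetherian domain; the remaining verifications are routine.
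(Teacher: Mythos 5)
Your argument is correct, but it proves Theorem \ref{Mainresult}(\ref{trivialtorsor}) by a genuinely different route from the paper. The paper deduces (\ref{trivialtorsor}) from Proposition \ref{criteria-torsor}: triviality of the torsor is used in its strong form, namely that $K$ is $\mathscr G-$equivariantly the function field of $F\otimes_{F^\delta}F^\delta[\mathscr G]$, so that (for connected $\mathscr G$, which is a rational variety) $K$ is purely transcendental over $F$; since $K$ splits $A$, the Severi--Brauer variety of $A$ acquires a point in a purely transcendental extension and hence already has one over the base, by the classical specialization theorem for Severi--Brauer varieties. You instead use only the weak consequence of triviality --- the existence of an $F-$rational point $\varepsilon:\mathscr R\to F$ --- and compensate with Azumaya-algebra machinery: $A\otimes_F\mathscr R$ is Azumaya, its class dies in $\mathrm{Br}(K)$ because $K$ is the Picard--Vessiot extension of the $\delta-F-$module $A$ (via part (\ref{split-trivial-equivalence}) and Corollary \ref{trivial-split}), and $\mathrm{Br}(\mathscr R)\to\mathrm{Br}(K)$ is injective by Auslander--Goldman since $\mathscr R$ is regular, the regularity coming from smoothness of torsors under smooth affine groups; specializing $\mathrm{End}_{\mathscr R}(P)$ at $\varepsilon$ then splits $A$. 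Each approach has its merits: the paper's keeps the tools classical (rationality of connected groups plus Severi--Brauer specialization) and yields the intermediate statement of Proposition \ref{criteria-torsor} about $F^0$; yours avoids the rationality of $\mathscr G$ and any connectedness bookkeeping entirely (note the paper's reduction implicitly uses that a trivial $\mathscr G-$torsor structure on the irreducible variety $\mathrm{maxspec}(\mathscr R)$ forces $\mathscr G$ to be connected), and in fact proves the slightly stronger assertion that any $F-$rational point of $\mathrm{maxspec}(\mathscr R)$ splits $A$. Your closing remark that $\varepsilon$ need not respect derivations, so the conclusion is splitting of $A$ as an $F-$algebra rather than as a $\delta-F-$algebra, is exactly the right reading of the statement.
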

	Now we shall explain the significance  of our results. For a $\delta-F-$central simple algebra $A,$ the existence of Picard-Vessiot splitting fields was first proved in \cite[Theorem 1]{JM08}. Their approach can be explained as follows. First tensor up $A$ by a finite Galois extension $L$ of $F$ so that $A\otimes_F L$ becomes a split $L-$algebra. Second, choose  an $L-$isomorphism $\psi: A\otimes_F L\to \mathrm{M}_n(L)$ of $L-$algebras and observe that  $\psi$ can be made into a $\delta-L-$isomorphism between $(A\otimes_F L, \delta_A\otimes \delta_L)$ and $(\mathrm{M}_n(L), D),$ where $\delta_L$  is the unique derivation  on $L$ that restricts to the derivation $\delta$ on $F$ and $D:=\psi\circ(\delta_A\otimes \delta_L)\circ \psi^{-1}.$  Third,  observe that there is a matrix $(p_{ij})\in \mathrm{M}_n(L)$ such that $(D-\delta^c) (x_{ij})=(x_{ij})(p_{ij})-(p_{ij})(x_{ij})$ (see \cite[Proposition 1]{JM08}). Fourth, show that  if $M$ is an $n-$dimensional $\delta-L-$module with a basis $e_1,\dots, e_n$ and  $\delta_M(e_i)=-\sum^n_{j=1}p_{ji}e_j$ then 
	the Picard-Vessiot extension $E$ of $L$ for the $\delta-L-$module $M$  splits the $\delta-L-$algebra $(\mathrm{M}_n(L), D).$ Finally, using the fact that a Picard-Vessiot extension $E$ of a finite Galois extension of $F$ can be embedded in some Picard-Vessiot extension $\tilde{E}$ of $F,$ it is shown that $\tilde{E}$ splits the $\delta-F-$central simple algebra $A.$  Though this method produces a  Picard-Vessiot splitting field for the $\delta-F-$central simple algebra $A,$ it does not establish any clear relationship with the $\delta-F-$algebra $A.$ Whereas, our Theorem \ref{Mainresult} (\ref{existence-minimal}) shows that the Picard-Vessiot extension for the $\delta-F-$module $A$ not only splits the $\delta-F-$algebra $A$ but also embeds in  every $\delta-$field,  including $\tilde{E},$ that splits $A$ and has $F^{\delta}$ as its field of constants.
	
	A linear algebraic group defined over an algebraically closed field of characteristic zero is reductive if and only if it is linearly reductive. Thus, if $A^{op}[\partial]$ denotes the ring of differential operators over the opposite central simple $F-$algebra $A^{op}$ then Theorem \ref{Mainresult}(\ref{reductivecase}) says that the algebraic group $\mathscr{G}(K|F)$ has a faithful and completely reducible module if and only if $A,$ as a left $A^{op}[\partial]-$module under the action $$\left(\sum^m_{j=1}a_j\partial^j\right)x=\sum^m_{j=1} \delta^j(x) a_j,\qquad a_j, x\in A,$$ is completely reducible (semisimple). When the identity component of the $\delta-$Galois group  is solvable, Theorem \ref{Mainresult}(\ref{liouvilliancase}) says that the algebraic closure $F^0$ of $F$ in the Picard-Vessiot extension for the $\delta-F-$module $A$ splits the algebra $A$ and that $A\otimes_F F^0$ has a flag of $\delta-$right ideals of length $\sqrt{\mathrm{dim}_F(A)}.$ 
	
	We shall now briefly outline the technique used in the proofs of Theorem \ref{Mainresult}(\ref{reductivecase}) and (\ref{liouvilliancase}).  Let $K$ be the Picard-Vessiot extension of the $\delta-F-$module $A.$ The Tannakian formalism in differential Galois theory states that  the Tannakian category  $\{\{A\}\}$ generated by the  $\delta-F-$module $A$ is equivalent to the category $\mathrm{Rep}_{\mathscr{G}}$ of all finite dimensional $F^\delta-$linear representations of the $\delta-$Galois group $\mathscr G:=\mathscr{G}(K|F)$ of the $\delta-F-$module $A.$ The equivalence is given by the following fiber functor over $F^{\delta}$ $$\mathscr S: \{\{A\}\}\to \mathrm{Rep}_{\mathscr G}; \quad \mathscr S(N)=(N\otimes_F K)^{\delta_N\otimes\delta_K}, \ N\in \{\{A\}\}$$  and the $\mathscr G-$action is given by $$\rho_N: \mathscr G\to \mathrm{GL}(\mathscr S(N)), \quad \rho_N(\sigma)(a\otimes k)=a\otimes \sigma(k), \quad a\in N,\  k\in K, \ \sigma\in \mathscr G.$$    Since  $A$ is an $F-$algebra, for each $\sigma\in \mathscr G,$ $\rho_A(\sigma)$ becomes an automorphism of the $F^\delta-$algebra   $\mathscr S(A).$  Thus image of $\rho_A$ lies inside $\mathrm{Aut}(\mathscr S(A)):$ $$\mathscr G\to \mathrm{Aut}(\mathscr S(A))\hookrightarrow \mathrm{GL}(\mathscr S(A)).$$
It follows from Theorem \ref{Mainresult}(\ref{existence-minimal}) that  $\mathscr S(A)\cong \mathrm{M}_n(F^\delta)$ and thus $\mathrm{M}_n(F^\delta)$ can be viewed as a $\mathscr G-$algebra where elements of $\mathscr G$ acts as automorphisms on $\mathrm{M}_n(F^\delta).$ We study this action in detail and obtain that the $\mathscr G-$stable ideals of $\mathrm{M}_n(F^\delta)$ are in an inclusion preserving bijective correspondence with the $\delta-$right ideals of $A.$ When $\mathscr G$ is reductive, we show that  $$\mathrm{M}_n(F^\delta)=\oplus^l_{i=1} I_i,$$ where each $I_i$ is minimal among all $\mathscr G-$stable ideals of $\mathrm{M}_n(F^\delta).$ Through the bijective correspondence, it is then shown that $A=\oplus^l_{i=1}\mathfrak{a}_i,$ where each $\mathfrak a_i$ is  minimal among all $\delta-$right ideals of $A.$ The converse of Theorem \ref{Mainresult}(\ref{reductivecase}) and the proof of Theorem \ref{Mainresult}(\ref{liouvilliancase}) is also handled similarly. 

From the isomorphism $\mathscr S(A)\cong \mathrm{M}_n(F^\delta),$ we also obtain a projective representation $$\mathscr G\to \mathrm{Aut}(\mathscr S(A))\cong\mathrm{Aut}(\mathrm{M}_n(F^\delta))\cong \mathrm{PGL}_n(F^\delta).$$ In the context of matrix $\delta-F-$algebras, the above projective representation coincides with the one given in \cite[p.1914]{JM08}. We also obtain the following extension of \cite[Theorem 2]{JM08}.

\begin{theorem} \label{proj-equivalence}($\mathrm{cf.}$ \cite[Theorem 4.11]{tsui-wang}, \cite[Theorem 2]{JM08}) Let $F$ be a $\delta-$field with  $F^{\delta}$ an algebraically closed field.  Let $K$ be a Picard-Vessiot extension of some $\delta-F-$module and $\mathscr G$ be its $\delta-$Galois group. Then, there is  a natural bijective correspondence $\Gamma$, induced by the Tannakian correspondence,  between $\delta-\mathrm{CSA}_{K/F, \ n},$ the collection of all $\delta-F-$algebra isomorphism classes of  central simple algebras of dimension $n^2$ that are split  by the $\delta-$field $K$ and $\mathrm{PRep}_{\mathscr G, \ n},$ the collection of all inequivalent (projective) representations of $\mathscr G(K|F)$ in $\mathrm{PGL}_n(F^\delta).$ 
\end{theorem}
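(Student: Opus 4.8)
The plan is to obtain both $\Gamma$ and its inverse directly from the Tannakian equivalence $\mathscr S\colon\{\{M_0\}\}\xrightarrow{\sim}\mathrm{Rep}_{\mathscr G}$, where $M_0$ is the $\delta-F-$module whose Picard--Vessiot extension is $K$, exploiting that this equivalence is a tensor equivalence and hence matches up algebra objects on the two sides. I would first record the two facts that make this work: (a) the $\delta-F-$modules lying in $\{\{M_0\}\}$ are exactly those trivialized by $K$, so every $\delta-F-$central simple algebra split by $K$ is, with its multiplication and unit, an algebra object of $\{\{M_0\}\}$; and (b) for $N\in\{\{M_0\}\}$ there is a functorial $\delta-K-$isomorphism $N\otimes_F K\cong\mathscr S(N)\otimes_{F^\delta}K$, with $\delta$ acting as $1\otimes\delta_K$ on the right, which for an algebra object is an isomorphism of $\delta-K-$algebras.

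To define $\Gamma$: given a $\delta-F-$central simple algebra $A$ of dimension $n^2$ split by $K$, apply $\mathscr S$ to the algebra object $A$ to get the $F^\delta-$algebra $\mathscr S(A)$ with its $\mathscr G-$action by $F^\delta-$algebra automorphisms (the $\rho_A$ of the introduction). Since $A\otimes_F K\cong(\mathrm M_n(K),\delta^c)$ and $K^\delta=F^\delta$, taking $\delta-$constants gives $\mathscr S(A)=(A\otimes_F K)^\delta\cong\mathrm M_n(K)^{\delta^c}=\mathrm M_n(F^\delta)$ as $F^\delta-$algebras, which is the algebra-level form of Theorem~\ref{Mainresult}(\ref{existence-minimal}). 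Fixing such an isomorphism, transporting the $\mathscr G-$action through it, and identifying $\mathrm{Aut}_{F^\delta}(\mathrm M_n(F^\delta))$ with $\mathrm{PGL}_n(F^\delta)$ by Skolem--Noether yields a homomorphism $\bar\rho_A\colon\mathscr G\to\mathrm{PGL}_n(F^\delta)$, well-defined up to conjugation because the chosen isomorphism $\mathscr S(A)\cong\mathrm M_n(F^\delta)$ is unique up to $\mathrm{Aut}_{F^\delta}(\mathrm M_n(F^\delta))$. A $\delta-F-$algebra isomorphism $A\cong A'$ induces, via $\mathscr S$, an isomorphism of algebra objects and hence conjugates the projective representations, so $\Gamma([A]):=[\bar\rho_A]$ is a well-defined map $\delta-\mathrm{CSA}_{K/F,\,n}\to\mathrm{PRep}_{\mathscr G,\,n}$.

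For the inverse I would run this backwards. Given $\bar\rho\colon\mathscr G\to\mathrm{PGL}_n(F^\delta)=\mathrm{Aut}_{F^\delta}(\mathrm M_n(F^\delta))$, let $V=\mathrm M_n(F^\delta)$ with $\mathscr G$ acting by $\bar\rho$; since $\bar\rho$ acts by algebra automorphisms, the multiplication $V\otimes_{F^\delta}V\to V$ and the unit $F^\delta\to V$ are $\mathscr G-$equivariant, so $V$ is an algebra object of $\mathrm{Rep}_{\mathscr G}$. Applying a tensor quasi-inverse $\mathscr T$ of $\mathscr S$ produces a finite-dimensional $\delta-F-$algebra $A_\rho=\mathscr T(V)$ with $\dim_F A_\rho=\dim_{F^\delta}V=n^2$ and $\mathscr S(A_\rho)\cong V$ as algebra objects. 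Using (b) for $N=A_\rho$ gives $A_\rho\otimes_F K\cong\mathrm M_n(F^\delta)\otimes_{F^\delta}K=(\mathrm M_n(K),\delta^c)$, so $A_\rho$ is split by $K$; and faithfully flat descent along $F\hookrightarrow K$ (compute the center of $A_\rho$ and test its ideals after $\otimes_F K$) shows $A_\rho$ is central simple over $F$ of dimension $n^2$. Equivalent projective representations are intertwined by an element of $\mathrm{Aut}_{F^\delta}(\mathrm M_n(F^\delta))$, i.e.\ by an isomorphism of algebra objects, whose image under $\mathscr T$ is a $\delta-F-$algebra isomorphism; so $[\bar\rho]\mapsto[A_\rho]$ is well-defined. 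Finally, $\mathscr S\circ\mathscr T\cong\mathrm{id}$ and $\mathscr T\circ\mathscr S\cong\mathrm{id}$ as tensor functors give $\mathscr T(\mathscr S(A))\cong A$ and $\mathscr S(\mathscr T(V))\cong V$ compatibly with the algebra structures, so the two maps are mutually inverse; the naturality of all the isomorphisms used (the Tannakian unit/counit and the base-change comparison of (b)) is what entitles us to say $\Gamma$ is induced by the Tannakian correspondence.

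The hard part will be the reverse direction — proving that $A_\rho=\mathscr T(V)$ is genuinely a central simple $F-$algebra of dimension $n^2$ split by $K$ — since this is where one must combine the tensor (algebra-object) compatibility of the Tannakian equivalence with the base-change comparison $N\otimes_F K\cong\mathscr S(N)\otimes_{F^\delta}K$ and then descend along $F\hookrightarrow K$. By comparison, the forward map and the bijectivity bookkeeping are essentially formal once Theorem~\ref{Mainresult}(\ref{existence-minimal}) and Skolem--Noether are in hand.
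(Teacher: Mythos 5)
Your proposal is correct and follows essentially the same route as the paper: $\Gamma$ is defined by applying the fiber functor to the algebra object $A$, identifying $\mathscr S(A)\cong \mathrm M_n(F^\delta)$ and $\mathrm{Aut}(\mathrm M_n(F^\delta))\cong\mathrm{PGL}_n(F^\delta)$, and the inverse is obtained by transporting the matrix-algebra structure onto $\mathscr S(N)$ for the $N\in\{\{M\}\}$ realizing the given representation and recovering $A_\rho\cong(\mathscr S(N)\otimes_{F^\delta}K)^{\mathscr G}$ via the $\mathscr G$-equivariant comparison isomorphism $\mu_N$ and descent. Your abstract quasi-inverse $\mathscr T$ is exactly the paper's explicit functor $W\mapsto(W\otimes_{F^\delta}K)^{\mathscr G}$, and your ``mutually inverse'' step corresponds to the paper's direct injectivity argument via $\mu_A$, $\mu_B$ and taking $\mathscr G$-invariants.
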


We shall now explain how the above theorem can also be obtained from \cite[Theorem 4.11]{tsui-wang}. 
Let $A$ be a $\delta-F-$central simple algebra, $K$ be the Picard-Vessiot extension for the $\delta-F-$module $A$ and $[A]$ denote the equivalence class of all $\delta-F-$central simple algebras that are $\delta-F-$isomorphic to $A.$ As in \cite[Definition 3.11]{tsui-wang}, let
$\mathrm{TF}(K/F, A)$ denote the collection of all $[B],$ where  $B$ is a $\delta-F-$central simple algebra and $(B\otimes K, \delta_B\otimes \delta_K)$ and  $(A\otimes K, \delta_A\otimes \delta_K)$ are isomorphic as $\delta-K-$algebras. Then, the definitions of $\mathrm{TF}(K/F, A)$ and $\delta-\mathrm{CSA}_{K|F, \ n}$ are in fact one and the same.  Now,  by \cite[Theorem 4.11]{tsui-wang}, there is a bijective correspondence between $\mathrm{TF}(K/F, A)$ and a cohomology set $\mathrm{H}^1(\mathscr G(K|F), \mathrm{PGL}_n(F^\delta))$  consisting of cohomologous one cocyles that are maps of algebraic varieties. It can be seen that the action of the group $\mathscr G$ on  $\mathrm{PGL}_n(F^\delta)$ defined in  \cite[Remark 4.4]{tsui-wang} is the trivial action. Therefore, $\mathrm{H}^1(\mathscr G(K|F), \mathrm{PGL}_n(F^\delta))$ is the same as $\mathrm{PRep}_{\mathscr G, \ n}.$

If $A$ has a maximal subfield $L,$ then from a theorem of Hochschild  (\cite[Theorem 6]{Hoc55}), there is a derivation $\delta_A$ on $A$ stabilizing both  the fields $L$ and $F.$    For such a derivation $\delta_A,$ with $F^{\delta}$ an algebraically closed field,  Theorem \ref{Mainresult}(\ref{maxstable}) says that the identity component of the $\delta-$Galois group of the $\delta-F-$module $A$ is a torus. 

The groups $\mathrm{GL}_n,$ $\mathrm{SL}_n$ or any  connected solvable group have only  trivial torsors. Therefore, by Theorem \ref{Mainresult}(\ref{trivialtorsor}), they cannot appear as the $\delta-$Galois group of the $\delta-F-$module $(A, \delta_A)$  unless $A$ is a split $F-$algebra.

\subsection{Organisation of the paper} In Section \ref{pvtheory}, we provide a detailed account of Picard-Vessiot theory for differential modules following the book \cite{MvdP03}.  In Section \ref{DCSA}, we prove Theorem \ref{Mainresult} (\ref{split-trivial-equivalence}) and (\ref{existence-minimal}) and examine torsion elements of the $\delta-F-$Brauer monoid. We show in Section \ref{projrep} that the underlying $\delta-F-$module structure of a $\delta-F-$central simple algebra $A$ naturally yields a  projective representation of the $\delta-$Galois group of $A.$ We then use this representation to prove Theorem \ref{proj-equivalence}. The proofs of Theorem \ref{Mainresult}(\ref{reductivecase}) and (\ref{liouvilliancase}) can be found in Section \ref{reducible-solvable} and the proofs of Theorem \ref{Mainresult}(\ref{maxstable}) and (\ref{trivialtorsor}) can be found in Section \ref{maxsubfield-stability}.

	\section{Picard-Vessiot Theory for Differential Modules.}\label{pvtheory}

Let $M$ be a $\delta-F-$module.  Given an $F-$basis $e_1,\dots, e_n$ of $M,$ there is a matrix $(a_{ij})\in \mathrm{M}_n(F)$ such that $$\delta_M(e_i)=-\sum^n_{j=1}a_{ji} e_j$$ and the differential equation $\delta^c(Y)=(a_{ij})Y$ is called the \emph{matrix differential equation associated to the $\delta-F-$module $M$} (with respect to the basis $e_1,\dots, e_n$ of $M$).   A matrix $(k_{ij})\in\mathrm{GL}_n(F)$ is said to be a \emph{fundamental matrix} for the differential equation $\delta^c(Y)=(a_{ij})Y$ if $\delta^c\left(k_{ij}\right)=(a_{ij})(k_{ij}).$ 
	
	\begin{proposition} \label{gaugeequivalence}(\cite[Section 1.2, p.7]{MvdP03})
		Let  $e_1,\dots, e_n$ and $\tilde{e}_1,\dots,\tilde{e}_n$ be two $F-$basis of $M$ and $(a_{ij}), (b_{ij})\in \mathrm{M}_n(F)$ be matrices  such that  $$\delta_M(e_i)=-\sum^n_{j=1}a_{ji} e_j,\quad\qquad \delta_M(\tilde{e}_i)=-\sum^n_{j=1}b_{ji} \tilde{e}_j.$$ Then, the matrix $(b_{ij})$  is \emph{gauge equivalent} to $(a_{ij})$ over $F$ by a matrix $(m_{ij})\in\mathrm{GL}_n(F).$ That is, $$(a_{ij})=(m_{ij})^{-1}(b_{ij})(m_{ij})-(m_{ij})^{-1}\delta^c(m_{ij}); \quad \text{for some matrix} \ (m_{ij})\in\mathrm{GL}_n(F).$$  In particular, any two matrix differential equation associated to the $\delta-F-$module $M$ are gauge equivalent over $F.$ Furthermore,  $(k_{ij})\in \mathrm{GL}_n(F)$ is a fundamental matrix for $\delta^c(Y)=(b_{ij})Y$ if and only if $(m_{ij})(k_{ij})\in \mathrm{GL}_n(F)$ is a fundamental matrix for $\delta^c(Y)=(a_{ij})Y.$
	\end{proposition}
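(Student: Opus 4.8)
The plan is to read the gauge transformation directly off the change-of-basis matrix relating the two given bases, and then to obtain the statement about fundamental matrices from it by a one-line calculation with the product rule; this uses only the definition of a differential module and of the coordinate-wise derivation.

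First I would introduce the transition matrix: since $e_1,\dots,e_n$ and $\tilde e_1,\dots,\tilde e_n$ are two $F$-bases of the same module $M$, there is a unique $(m_{ij})\in\mathrm{GL}_n(F)$ with $e_j=\sum_{k=1}^n m_{kj}\tilde e_k$ for every $j$. Then I would compute $\delta_M(e_j)$ in two ways. On one hand, the defining relation $\delta_M(e_j)=-\sum_i a_{ij}e_i$ followed by re-expanding each $e_i$ in the $\tilde e$-basis gives $\delta_M(e_j)=-\sum_{i,k}m_{ki}a_{ij}\,\tilde e_k$. On the other hand, applying $\delta_M$ to $e_j=\sum_k m_{kj}\tilde e_k$, using the Leibniz rule $\delta_M(c\,x)=\delta_F(c)\,x+c\,\delta_M(x)$ for $c\in F$ and then the defining relation for the $\tilde e$-basis, gives $\delta_M(e_j)=\sum_k\delta_F(m_{kj})\,\tilde e_k-\sum_{i,k}b_{ki}m_{ij}\,\tilde e_k$. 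Comparing coefficients of each $\tilde e_k$ and writing the resulting scalar identities in matrix form yields $(m_{ij})(a_{ij})=(b_{ij})(m_{ij})-\delta^c(m_{ij})$, and left-multiplication by $(m_{ij})^{-1}$ gives the asserted gauge equivalence. The ``in particular'' clause is then immediate, since every matrix differential equation associated to $M$ arises from a choice of $F$-basis, so any two of them are related in this way by the transition matrix between the two bases.

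For the last assertion I would use the formal identity underlying gauge equivalence. Setting $P=(m_{ij})$, so that $(a_{ij})=P^{-1}(b_{ij})P-P^{-1}\delta^c(P)$, and using $\delta^c(P^{-1})=-P^{-1}\delta^c(P)P^{-1}$, one expands directly to obtain, for an arbitrary matrix $(k_{ij})$,
\[
\delta^c\big(P^{-1}(k_{ij})\big)-(a_{ij})\,P^{-1}(k_{ij})\;=\;P^{-1}\big(\delta^c(k_{ij})-(b_{ij})(k_{ij})\big).
\]
Since $P^{-1}$ is invertible, the left-hand side vanishes precisely when the right-hand side does, and $P^{-1}(k_{ij})$ lies in $\mathrm{GL}_n$ precisely when $(k_{ij})$ does; hence $P^{-1}(k_{ij})$ is a fundamental matrix for $\delta^c(Y)=(a_{ij})Y$ if and only if $(k_{ij})$ is one for $\delta^c(Y)=(b_{ij})Y$, which is the stated equivalence. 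I do not expect any genuine obstacle here: the whole proposition is elementary linear algebra together with the Leibniz rule, and the only point requiring attention is keeping the left/right-multiplication and index conventions consistent between the gauge formula and the fundamental-matrix statement --- in particular whether the transition matrix or its inverse appears.
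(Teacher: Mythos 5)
Your argument is correct and is exactly the standard computation (the paper gives no proof of its own, deferring to van der Put--Singer, Section 1.2, whose argument is the one you reproduce): read the gauge matrix off the change of basis, compare the two expansions of $\delta_M(e_j)$, and then verify the fundamental-matrix claim by the Leibniz rule together with $\delta^c(P^{-1})=-P^{-1}\delta^c(P)P^{-1}$. The one point worth making explicit is the convention issue you flag at the end: your (correct) identity
$\delta^c\bigl(P^{-1}(k_{ij})\bigr)-(a_{ij})P^{-1}(k_{ij})=P^{-1}\bigl(\delta^c(k_{ij})-(b_{ij})(k_{ij})\bigr)$
shows that, with $(m_{ij})$ the transition matrix satisfying $e_j=\sum_k m_{kj}\tilde e_k$ and with the displayed gauge formula $(a_{ij})=(m_{ij})^{-1}(b_{ij})(m_{ij})-(m_{ij})^{-1}\delta^c(m_{ij})$, a fundamental matrix $(k_{ij})$ for $\delta^c(Y)=(b_{ij})Y$ corresponds to the fundamental matrix $(m_{ij})^{-1}(k_{ij})$ for $\delta^c(Y)=(a_{ij})Y$, not $(m_{ij})(k_{ij})$ as the final clause of the proposition literally states. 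So your proof in fact establishes the corrected version; the proposition as printed is internally inconsistent by one inverse, and should either replace $(m_{ij})(k_{ij})$ by $(m_{ij})^{-1}(k_{ij})$ in the last sentence or take $(m_{ij})$ there to be the inverse transition matrix. This does not affect any use made of the proposition later in the paper.
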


From the above proposition, it follows that a $\delta-F-$module can be associated to a unique (up to gauge equivalence) differential equation and thus making the definition of a Picard-Vessiot extension for a  $\delta-F-$module, given in the introduction, unambiguous.


\subsection{Construction of a Picard-Vessiot Extension} If  $F^{\delta}$ is an algebraically closed field then one can construct  a Picard-Vessiot extension for a $\delta-F-$module  $M.$ We shall briefly outline the construction and record certain facts from Picard-Vessiot theory for later usage   (\cite[Lemma 1.17 \& Proposition 1.20]{MvdP03}). Let $e_1, \dots, e_n$ be an $F-$basis of $M$ and $(a_{ij})\in \mathrm{M}_n(F)$ is  a matrix such that $$\delta_M(e_i)=-\sum^n_{j=1}a_{ji}e_j.$$  Consider the polynomial ring $F[x_{ij}\ |\ 1\leq i,j\leq n]$ with the derivation that maps $$(x_{ij})\mapsto (a_{ij})(x_{ij}).$$ Let $w:=\mathrm{det}(x_{ij})$ and $\mathscr M$ be any $\delta-$ideal that is maximal among all $\delta-$ideals that do not contain $w.$ Such a $\delta-$ideal $\mathscr M$ can be shown to be a prime ideal.  The  quotient ring $$\mathscr R:= \frac{F[x_{ij}\ |\ 1\leq i,j\leq n][w^{-1}]}{\mathscr M},$$   called the \emph{Picard-Vessiot ring} for the $\delta-F-$module $M,$ is an integral domain as well as a $\delta-F-$algebra. If we denote the image of $x_{ij}$ in $\mathscr R$ by $k_{ij}$ and $w$ by $\omega$, then the matrix $(k_{ij})$ belongs to $\mathrm{GL}_n(\mathscr R)$ and $\delta^c(k_{ij})=(a_{ij})(k_{ij}).$ Therefore, as an $F-$algebra, $\mathscr R$ is finitely generated by the entries of the matrix $(k_{ij})$ and $\omega^{-1}.$  Let $\delta_K$ be the extension of the derivation on $\mathscr R$ to the field of fractions $K$ of $\mathscr R.$  When $F^{\delta}$ is an algebraically closed field, $K^{\delta}=F^{\delta}$ and it follows that $K$  is a Picard-Vessiot extension for the $\delta-F-$module $M$ as $K$ is generated as a field by the entries of the fundamental matrix $(k_{ij})$ and $F.$ 

Note that the construction of $K$ is independent of the  choice of basis of $M$ as $\mathrm{GL}_n(K)$ will have fundamental matrices for any other differential equation $\delta^c(Y)=(b_{ij}) Y,$  where $(b_{ij})\in \mathrm{M}_n(F)$  is gauge equivalent  to the matrix $(a_{ij})$ over $F.$  

Recall that the $\delta-$Galois group $\mathscr G:=\mathscr{G}(K|F)$ of the $\delta-F-$module $M$  is the group of all $\delta-F-$automorphisms  of $K.$ For any $\sigma\in \mathscr G,$ it is easily seen that $(k_{ij})^{-1}(\sigma(k_{ij}))=(c_{ij\sigma})\in\mathrm{GL}_n(F^{\delta})$ and that the mapping $\mathscr G\mapsto\mathrm{GL}_n(F^{\delta})$ is a faithful group homomorphism.  Every element $(c_{ij})\in\mathrm{GL}_n(F^{\delta})$ acts on the coordinate ring $F^{\delta}[\mathrm{GL}_n]=F^{\delta}[x_{ij}\ |\ 1\leq i,j\leq n][w^{-1}]$ by right translation: $(x_{ij})\mapsto (x_{ij})(c_{ij}).$  This action extends to an action on $$F\otimes_{F^{\delta}} F^{\delta}[\mathrm{GL}_n]\cong F[x_{ij}\ |\ 1\leq i,j\leq n][w^{-1}]$$ by $\mathrm{GL}_n(F^\delta)$ acting trivially on $F.$  Under this action, the $\delta-$Galois group $\mathscr G$ can be identified with those matrices $(c_{ij})\in\mathrm{GL}_n(F^{\delta})$ that stabilizes the maximal $\delta-$ideal $\mathscr M.$ Thus, the identification \begin{equation*}\mathscr G\mapsto\mathrm{GL}_n(F^{\delta});\quad \sigma\to (c_{ij\sigma})\end{equation*} 
makes $\mathscr G$ a Zariski closed subgroup of $\mathrm{GL}_n(F^{\delta})$ (\cite[Observation 1.26 \& Theorem 1.27]{MvdP03}). 

Similar to the Galois theory of polynomial equations,  there is a Galois correspondence between the  algebraic subgroups $\mathscr{H}$ of $\mathscr G$ and the $\delta-$subfields $L$ that are intermediate to $K$ and $F:$ 
\begin{align*} &\mathscr H \mapsto K^\mathscr{H}=\{\alpha\in K\ | \ \sigma(\alpha)=\alpha\ \text{for all}\ \sigma\in \mathscr{H}\} \\  & \mathscr G(K|L)=\{\sigma\in \mathscr G(K|F)\ | \ \sigma(l)=l \ \text{for all}\ l\in L\}\mapsfrom L.\end{align*} In particular, $F=E^\mathscr{G}$ and if $\mathscr G^0$ is the identity component of $\mathscr G$ then $F^0:=E^{\mathscr G^0}$ is the algebraic closure of $F$ in $K.$


\subsection{The Tannakian Formalism in Differential Galois Theory} \label{tannakianformalism} Henceforth, all unadorned tensor products of modules are over the field $F.$
A map $\phi: (M, \delta_M)\to (N, \delta_N)$ is said to be a $\delta-F-$module homomorphism if $\phi$ is a homomorphism of $F-$modules such that $\phi(\delta_M(x))=\delta_N(\phi(x))$ for all $x\in M.$

\begin{proposition} (\cite[Chapter II, Corollary 1]{Kol-Book}) Let $M$ be a $\delta-F-$module and \begin{equation*}\mu_M:M^\delta\otimes_{F^{\delta}} F\to M,\qquad \mu_M(x\otimes f)= fx, \quad f\in F, \ x\in M.\end{equation*}
	Then, $\mu_M$ is an injective map of $\delta-F-$modules.
\end{proposition}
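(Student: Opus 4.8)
The plan is to establish the statement in two stages: first that $\mu_M$ is compatible with the $F$-module and differential structures, and then that it is injective; the second stage is the "linear independence of constants" principle and is the only part with any content.

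\textbf{The structural check.} I would equip $M^\delta\otimes_{F^\delta}F$ with the $F$-module structure $g\cdot(x\otimes f)=x\otimes(gf)$ and the additive operator $\delta(x\otimes f)=x\otimes\delta_F(f)$. Since $F^\delta=\ker\delta_F$ and $cx\in M^\delta$ for $c\in F^\delta$, $x\in M^\delta$ (because $\delta_M(cx)=\delta_F(c)x+c\delta_M(x)=0$), this operator is well defined over the ground field $F^\delta$, and it satisfies the Leibniz rule relative to the $F$-action, so $M^\delta\otimes_{F^\delta}F$ is a $\delta-F-$module (its finite generation over $F$ being a consequence of the injectivity proved below, via $\dim_{F^\delta}M^\delta\le\dim_F M$). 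The map $\mu_M$ is $F$-linear by construction, and it intertwines the derivations: $\mu_M(\delta(x\otimes f))=\delta_F(f)x$, while $\delta_M(\mu_M(x\otimes f))=\delta_M(fx)=\delta_F(f)x+f\delta_M(x)=\delta_F(f)x$ since $x\in M^\delta$. Hence $\mu_M$ is a homomorphism of $\delta-F-$modules.

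\textbf{Reduction of injectivity.} Because $F$ is a field, $F^\delta$ is a field and $M^\delta$ is an $F^\delta$-vector space; choosing a basis, every element of $M^\delta\otimes_{F^\delta}F$ may be written as $\sum_{i=1}^r x_i\otimes f_i$ with $x_1,\dots,x_r\in M^\delta$ linearly independent over $F^\delta$ and $f_i\in F$, and such an element vanishes if and only if all $f_i=0$. Thus $\mu_M$ is injective precisely when the following holds: if $x_1,\dots,x_r\in M^\delta$ are linearly independent over $F^\delta$, then they are linearly independent over $F$.

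\textbf{Proof of the claim and the main difficulty.} Suppose some $F^\delta$-linearly independent tuple in $M^\delta$ satisfies a nontrivial $F$-linear relation, and among all such relations pick one, $\sum_{i=1}^r f_ix_i=0$, with $r$ minimal. Minimality forces every $f_i\ne0$ — otherwise deleting a vanishing term yields a shorter relation among a still $F^\delta$-independent subtuple — so after dividing by $f_1$ we may assume $f_1=1$. Applying $\delta_M$ and using $\delta_M(x_i)=0$ gives $\sum_{i=1}^r\delta_F(f_i)x_i=0$, and since $\delta_F(f_1)=0$ this is a relation among $x_2,\dots,x_r$ of length at most $r-1$; by minimality it must be trivial, so $\delta_F(f_i)=0$, i.e.\ $f_i\in F^\delta$, for every $i$. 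Then $\sum f_ix_i=0$ is a nontrivial $F^\delta$-linear relation among $x_1,\dots,x_r$, contradicting their $F^\delta$-independence. This proves the claim, hence the injectivity of $\mu_M$. There is no serious obstacle here; the only care needed is the bookkeeping in the minimal-relation argument — choosing the counterexample within the class of $F^\delta$-independent tuples so that deleting a term or differentiating keeps us in that class, and observing that normalizing $f_1=1$ is exactly what makes the differentiated relation strictly shorter.
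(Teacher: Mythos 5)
Your proof is correct. The paper does not prove this proposition itself --- it cites it from Kolchin --- and your argument (the structural check plus the minimal-relation ``linear independence over $F^\delta$ implies linear independence over $F$'' step, obtained by normalizing $f_1=1$ and differentiating) is precisely the standard proof of that cited result.
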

It is seldom the case that  $\mu_M$ in  the above proposition is surjective and in fact, $\mu_M$ is  surjective, or equivalently, $\mu_M$ is an isomorphism if and only if $M$ is a trivial $\delta-F-$module. 

Let  $\delta^c(Y)=(a_{ij})Y$ be the matrix differential equation  associated to a $\delta-F-$module $M$ with respect to an $F-$basis $e_1,\dots, e_n$ of $M.$  Let $K$ be a $\delta-$field extension of $F$ with $K^\delta=F^\delta$ and $\lambda: M\otimes K\to K^n$ be the isomorphism of $K-$vector spaces defined by $$\lambda(\sum^n_{i=1}e_i\otimes k_i)= \begin{pmatrix}k_1\\ k_2\\ \vdots\\ k_n\end{pmatrix},\quad k_i\in K.$$ 
Then, $\lambda$ becomes an isomorphism of $\delta-K-$modules $(M\otimes K, \delta_M\otimes \delta_K)$ and $(K^n, \delta_{K^n}),$ where \begin{equation}\label{lambdadefn}\delta_{K^n} \begin{pmatrix}k_1\\ k_2\\ \vdots\\ k_n\end{pmatrix}=\delta^c \begin{pmatrix}k_1\\ k_2\\ \vdots\\ k_n\end{pmatrix}-(a_{ij})\begin{pmatrix}k_1\\ k_2\\ \vdots\\ k_n\end{pmatrix}.\end{equation}

Therefore, an element $\sum^n_{i=1}e_i\otimes k_i\in (M\otimes K)^{\delta_M\otimes \delta_K}$ if and only if the column vector $\lambda(\sum^n_{i=1}e_i\otimes k_i)\in K^n$ is a solution of $\delta^c(Y)=(a_{ij})Y.$ Thus, $\lambda$ restricts to an isomorphism of the $F^{\delta}-$ vector space $(M\otimes K)^{\delta_M\otimes \delta_K}$ and $V:=\{v\in K^n\ | \ \delta^c(v)=(a_{ij})v\},$ where $$n= \mathrm{dim}_F(M)\geq \mathrm{dim}_{F^\delta}\left((M\otimes K)^{\delta_M\otimes \delta_K}\right)= \mathrm{dim}_{F^{\delta}}(V).$$
It now follows that $(M\otimes K, \delta_M\otimes \delta_K)$ is a trivial $\delta-K-$module, that is, $(M\otimes K, \delta_M\otimes \delta_K)$ has  a $K-$basis $x_1,\dots, x_n\in (M\otimes K)^{\delta_M\otimes \delta_K}$ if and only if the matrix $(k_{ij})\in \mathrm{GL}_n(K),$ whose $j-$th column is $\lambda(x_j),$ is a fundamental matrix for $\delta^c(Y)=(a_{ij})Y;$ in which case, for $1\leq j\leq n,$ $x_j=\sum^n_{i=1} e_i\otimes k_{ij}.$   Using these observations, one can provide the following alternate definition of a Picard-Vessiot extension for a $\delta-F-$module.

A  $\delta-$field extension $K$ of $F$ is called a \emph{Picard-Vessiot extension}  for the $\delta-F-$module $M$ if it has the following properties  (\cite[Exersice 1.16]{MvdP03}).
\begin{enumerate}[(i)] \item  $K^\delta=F^{\delta}.$ 
	\item \label{muisomorphism} $(M\otimes K, \delta_M\otimes\delta_K)$ is a trivial  $\delta-K-$module.
	\item \label{generation}If $e_1,\dots, e_n$ is a basis of $M$ then $K$ is generated as a field over $F$ by the coefficients of all $v\in (M\otimes K)^{\delta_M\otimes \delta_K}$ with respect to the $K-$basis $e_1\otimes 1, \dots, e_n\otimes 1.$
\end{enumerate}

Let $(M\otimes K, \delta_M\otimes\delta_K)$ be a trivial  $\delta-K-$module, where $K$ is a Picard-Vessiot extension for some $\delta-F-$module. The coordinate-wise action of the $\delta-$Galois group $\mathscr G:= \mathscr G(K|F)$ on $K^n,$ under the isomorphism $\lambda$ defined in Equation (\ref{lambdadefn}), becomes  the $1\otimes \mathscr G-$action on $M\otimes K:$ $(1\otimes \sigma)(\sum^n_{i=1} x_i\otimes k_i)=\sum^n_{i=1} x_i\otimes \sigma(k_i).$ Since the $\mathscr G$ action on $K^n$ stabilizes  the solution space $V,$ the $\mathscr G-$action on $M\otimes K$ stabilizes  $(M\otimes K)^{\delta_M\otimes \delta_K}$ and we obtain that the group representations  \begin{equation*} \mathscr G\to \mathrm{GL}(V) \quad \text{and}\quad \mathscr G\to \mathrm{GL}((M\otimes K)^{\delta_M\otimes \delta_K})\end{equation*} 
are equivalent. For a fundamental matrix $(k_{ij})\in \mathrm{GL}_n(K)$ of the differential equation $\delta^c(Y)=(a_{ij})Y,$  $\delta^c\left((k_{ij})^{-1}(\sigma(k_{ij}))\right)=0$ and thus  $(k_{ij})^{-1}(\sigma(k_{ij}))\in \mathrm{GL}_n(F^{\delta}).$ The columns of $(k_{ij})$ form a basis of $V$ and with respect to this basis the representation $\mathscr G\to \mathrm{GL}(V)$ becomes the  representation  \begin{equation}\label{repcocycle}\mathscr G\to \mathrm{GL}_n(F^{\delta});\qquad \sigma\mapsto (k_{ij})^{-1}(\sigma(k_{ij})).\end{equation}

 Let  $M^{\vee}$ be the dual $\delta-F-$module\footnote{$\delta_{M^{\vee}}(f)(m)=f(\delta_M(m))-\delta_F(f(m)),$ for all $f\in M^{\vee},$ $m\in M.$} of the $\delta-F-$module $M$. If $(k_{ij})\in \mathrm{GL}_n(K)$ is a fundamental matrix of  the differential equation $\delta^c(Y)=(a_{ij})Y$ associated to the $\delta-F-$module $M$ then $(k_{ji})^{-1} \in \mathrm{GL}_n(K)$ is a fundamental matrix of  the differential equation $\delta^c(Y)=-(a_{ji})Y,$  which is a differential equation associated to the dual $\delta-F-$module $M^{\vee}.$ Thus,   $M$ and $M^{\vee}$ have the same Picard-Vessiot extension.
 
  Denote  the category of $\delta-F-$modules formed by taking finite direct sums of  tensor products $(M^{\otimes n_1} \otimes (M^{\vee})^{\otimes n_2}, \delta^{\otimes n_1}_M \otimes \delta^{\otimes n_2}_{M^{\vee}})$ and their subquotients by $\{\{M\}\}.$   Then $\{\{M\}\}$ is $F^{\delta}-$linear, rigid tensor category, which is also a full subcategory of the category $\mathrm{Diff}_F$ of $\delta-F-$modules.  The forgetful functor from $\{\{M\}\}$ to the category $\mathrm{Vec}_F$ of finite dimensional  $F-$vector spaces is a fiber functor.  

If $F^{\delta}$ is an algebraically closed field then a theorem of Deligne (\cite[Section 9]{Del90}), guarantees a fiber functor from $\{\{M\}\}$ to $\mathrm{Vec}_{F^{\delta}}$ and the target of this functor can be shown to be equivalent to the category $\mathrm{Rep}_\mathscr G$ of all finite dimensional representations of a linear algebraic group $\mathscr G$ defined over $F^{\delta}.$ The existence of this fiber functor is equivalent to that of a Picard-Vessiot extension for the $\delta-F-$module $M$ and the group $\mathscr G$ can be identified with the $\delta-$Galois group of the Picard-Vessiot extension.  In fact, if $K$ is a  Picard-Vessiot extension  for the $\delta-F-$module $M$ then the functor $$\mathscr S: \{\{M\}\}\to \mathrm{Vec}_{F^{\delta}};\qquad \mathscr S(N)=(N\otimes K)^{\delta_N\otimes \delta_K}, \quad N\in \{\{M\}\}$$ is a fiber functor (over $F^{\delta}$) and induces an equivalence of categories between $\{\{M\}\}$ and  $\mathrm{Rep}_\mathscr G.$  Here, the group $\mathscr G$ acts on $N\otimes K$ by $1\otimes \mathscr G$ and  stabilizes the $F^{\delta}-$module  $\mathscr S(N)=(N\otimes K)^{\delta_N\otimes \delta_K}\subset N\otimes K.$  As noted earlier,  this restriction gives a morphism of algebraic groups. 
  \begin{equation*}\label{groupaction} \rho_N: \mathscr{G}\hookrightarrow\mathrm{GL}(\mathscr S (N));\quad  \rho_N(\sigma)(x)=(1\otimes\sigma)x,
	\qquad x\in \mathscr S(N).\end{equation*} 
 One can recover a $\delta-$module, up to $\delta-F-$isomorphisms,  from a given representation of $\mathscr{G}$ as follows. Let $W\in \mathrm{Rep}_\mathscr G.$ Then there is a $\delta-F-$module $N\in \{\{M\}\}$ such that $$(N\otimes K)^{\delta_N\otimes\delta_K}=\mathscr S(N)=W$$ and that the representation $\mathscr G\to \mathrm{GL}(W)$ is equivalent to $\mathscr G\to \mathrm{GL}(\mathscr S(N)).$

As we have observed earlier, $W\otimes_{F^{\delta}} K \overset{\mu_N}{\cong} N\otimes K$ as  $\delta-K-$modules.  The $\delta-F-$isomorphism $\mu_N$ is also $\mathscr{G}-$equivariant if we endow the following $\mathscr G-$actions. On $N\otimes K,$ we have the $1\otimes \mathscr G$ action:  \begin{equation}\label{actiononsinglecoordinate}x\otimes k\mapsto (1\otimes \sigma)(x\otimes k)=x\otimes \sigma(k), \quad x\in N, \ k\in K\end{equation} and on $W\otimes_{F^{\delta}} K=\mathscr S(N)\otimes_{F^\delta} K,$ we have the $1\otimes \mathscr G\otimes_{F^\delta} \mathscr G$ action: \begin{equation}\label{actiononbothcoordinates}y\otimes k\mapsto (1\otimes \sigma)y \otimes \sigma(k), \quad y\in \mathscr S(N), \ k\in K.\end{equation}

 Since $K^{\mathscr G}=F,$ if $e_1,\dots, e_n$ is an $F-$basis of $N$ and $x\in N\otimes K,$ then $x=\sum^n_{i=1}e_i\otimes k_i$ and we have $\sigma(x)=x$ implies $\sigma(k_i)=k_i$ for each $i=1,2,\dots, n.$ Thus we obtain the following $\delta-F-$isomorphisms of $\delta-F-$modules \begin{equation*}\label{recovering module}N\cong N\otimes F\cong (N\otimes K)^\mathscr{G}\cong \left(W\otimes_{F^{\delta}} K\right)^\mathscr G.\end{equation*}

In fact, the assignment  $W\mapsto \left(W\otimes_{F^{\delta}} K\right)^\mathscr G$ is a functor from $\mathrm{Rep}_\mathscr{G}$ to $\{\{M\}\}$ that acts as an inverse for $\mathscr S$ (\cite[Remark 2.34]{MvdP03}).  We shall heavily rely upon this functor in the proofs of Theorems \ref{Mainresult} and \ref{proj-equivalence}.


\subsection{Completely reducible modules.} A $\delta-F-$module  $M$ is said to be \emph{completely reducible} if for every $\delta-F-$submodule $N$ of $M$ there is a $\delta-F-$submodule $N'$ of $M$ such that $M=N\oplus N'$. Note that an irreducible $\delta-F-$module is completely reducible. If we view a $ \delta-F-$module $M$ as an $F[\partial]-$module with the action $$\left(\sum^m_{j=1}\alpha_j\partial^j\right) x=\sum^m_{j=1}\alpha_j\delta^j(x),$$ for all $x\in M$ and $\alpha_j\in F$ then $M$ is  completely reducible as a $\delta-F-$module if and only if  $M$ is a completely reducible left $F[\partial]-$module. From the theory of completely reducible modules, it then follows that $M$ is  completely reducible as an $ \delta-F-$module  if and only if $M$  is an internal direct sum of a family of irreducible $\delta-$submodules of $M$ (\cite[Section 3.5, p.119]{Jac-80-book}). 

Let $M$ be a trivial $\delta-F-$module. Then, it is an internal direct sum of one dimensional trivial (irreducible) $\delta-F-$modules and therefore it is completely reducible. So, for any $\delta-F-$submodule $N,$ we have $M=N\oplus N'$ for some $\delta-F-$ submodule $N'$ of $M.$ If $\pi_N: M\to N$ is the corresponding projection onto the first coordinate, then $\pi_N$ is a $\delta-F-$epimorphism. This implies $N$ is also  a trivial $\delta-F-$submodule. More in general, any homomorphic image of trivial $\delta-F-$module is trivial. Similarly, one shows $M/N$ is also a trivial $\delta-F-$module. It is readily seen that direct sums, tensor products and dual of a trivial $\delta-F-$module is also trivial.  For a $\delta-F-$field extension $K$ of $F,$ we have the natural isomorphism of $\delta-K-$modules $$(M\otimes M^\vee\otimes K, \delta_M\otimes \delta_{M^{\vee}}\otimes \delta_K) \cong (M\otimes K\otimes _K M^\vee\otimes K, \delta_M\otimes \delta_K\otimes_K \delta_{M^{\vee}}\otimes \delta_K); a\otimes b\otimes k\mapsto a\otimes 1\otimes_K b\otimes k.$$ 
Through an induction argument, one then obtains 
 the following isomorphism of $\delta-K-$modules	$$(M^{\otimes n_1}\otimes (M^{\vee})^{\otimes n_2} \otimes K , \delta^{\otimes n_1}_M\otimes  \delta^{\otimes n_2}_{M^{\vee}}\otimes 
 \delta_K)\cong ((M\otimes K)^{\otimes n_1} \otimes (M^{\vee}\otimes K)^{\otimes n_2}, (\delta_M \otimes \delta_K)^{\otimes n_1}\otimes(\delta_{M^{\vee}}\otimes \delta_K)^{\otimes n_2}).$$  Thus, if $M\otimes K$ is a trivial $\delta-K-$module then so is  $M^{\otimes n_1}\otimes (M^{\vee})^{\otimes n_2} \otimes K$ and it follows that for any $N\in \{\{M\}\},$ $N\otimes K$ is also a trivial $\delta-K-$module.

Let $\mathscr G$ be a linear algebraic group  defined over $F^{\delta},$ $V$ be a  finite dimensional $F^{\delta}-$vector space which is also a $\mathscr{G}-$module. Then, $V$ is said to be \emph{completely reducible} if every $\mathscr{G}-$submodule $W$ of $V$ has a $\mathscr{G}-$submodule $W'\subset V$ such that $V=W\oplus W'.$

\begin{proposition}\label{completelyreducible-reductive} A $\delta-F-$module $M$ is completely reducible if and only if the $\delta-$Galois group $\mathscr G$ of $M$ is a reductive algebraic group.\end{proposition}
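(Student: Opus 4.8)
The plan is to pass through the Tannakian dictionary set up in Section \ref{tannakianformalism}, translating "completely reducible $\delta$-$F$-module" into "completely reducible object of $\mathrm{Rep}_{\mathscr G}$" and then invoking the standard representation-theoretic characterization of reductive groups in characteristic zero. First I would recall that the fiber functor $\mathscr S:\{\{M\}\}\to\mathrm{Rep}_{\mathscr G}$ together with its quasi-inverse $W\mapsto (W\otimes_{F^\delta}K)^{\mathscr G}$ is an equivalence of $F^\delta$-linear abelian (indeed tensor) categories; in particular it is exact and sends $\delta$-$F$-submodules to $\mathscr G$-submodules and vice versa, and it respects direct sum decompositions. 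Consequently a $\delta$-$F$-module $N\in\{\{M\}\}$ is completely reducible in the sense of Section \ref{tannakianformalism} (every $\delta$-submodule is a direct summand) if and only if the $\mathscr G$-module $\mathscr S(N)$ is completely reducible as a representation of $\mathscr G$. Applying this with $N=M$ itself, the statement "$M$ is completely reducible" becomes "$\mathscr S(M)$ is a completely reducible $\mathscr G$-module." Since $\mathscr S$ is faithful and the $\delta$-Galois group acts faithfully on $\mathscr S(M)$ (this is exactly the faithful representation $\rho_M$ of Equation (\ref{repcocycle})), $\mathscr S(M)$ is a faithful $\mathscr G$-module.

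Next I would invoke the algebraic-group input: a linear algebraic group $\mathscr G$ over an algebraically closed field of characteristic zero is reductive if and only if it is linearly reductive, i.e. if and only if every finite-dimensional representation of $\mathscr G$ is completely reducible; and moreover it suffices that $\mathscr G$ admit a single faithful completely reducible representation, because any representation of $\mathscr G$ embeds into a direct sum of tensor powers of a faithful representation and its dual (Chevalley's theorem on the construction of representations), and subquotients of completely reducible representations of a linearly reductive group are again completely reducible. This gives the chain: $\mathscr G$ reductive $\iff$ every object of $\mathrm{Rep}_{\mathscr G}$ is completely reducible $\iff$ $\mathscr S(M)$ is completely reducible $\iff$ $M$ is completely reducible. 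For the implication "$M$ completely reducible $\Rightarrow$ $\mathscr G$ reductive," I would argue that if $\mathscr S(M)$ is completely reducible then so is every $\mathscr S(N)$ for $N\in\{\{M\}\}$ (since every such $N$ is built from $M$, $M^\vee$ by tensor products, direct sums and subquotients, operations under which complete reducibility of $\mathscr G$-modules is preserved once we know the generator is completely reducible and $\mathscr G$ has no normal unipotent subgroup — or, more cleanly, use that the identity component of $\mathscr G$ acts reductively on a faithful module forces $R_u(\mathscr G)$ to act trivially, hence to be trivial); conversely if $\mathscr G$ is reductive, linear reductivity directly gives complete reducibility of $\mathscr S(M)$, hence of $M$.

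The cleanest route for the forward direction avoids re-deriving Chevalley: if $M$ is completely reducible then $\mathscr S(M)=(M\otimes K)^{\delta}$ is a completely reducible faithful $\mathscr G$-module; suppose $R_u(\mathscr G)$ is the unipotent radical; then $\mathscr S(M)^{R_u(\mathscr G)}$ is a nonzero $\mathscr G$-submodule (unipotent groups in characteristic zero always have nonzero fixed vectors), so by complete reducibility it has a $\mathscr G$-stable complement, and iterating (or using that the fixed subspace of $R_u(\mathscr G)$ in any completely reducible faithful module must be everything, else $R_u(\mathscr G)$ acts nontrivially on a subquotient with no fixed vectors, contradiction) we conclude $R_u(\mathscr G)$ acts trivially on a faithful module, hence $R_u(\mathscr G)=\{e\}$ and $\mathscr G$ is reductive. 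I expect the main obstacle to be purely expository: making precise the claim that complete reducibility of the single module $M$ propagates to all of $\{\{M\}\}$ (equivalently, that a faithful completely reducible representation detects reductivity), and correctly citing the characteristic-zero equivalence of "reductive" and "linearly reductive." Once those standard facts are in hand, the Tannakian equivalence of Section \ref{tannakianformalism} does all the remaining work and the proof is short.
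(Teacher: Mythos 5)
Your proposal is correct and follows essentially the same route as the paper: use the Tannakian equivalence between $\{\{M\}\}$ and $\mathrm{Rep}_{\mathscr G}$ to transfer complete reducibility of $M$ to complete reducibility of the faithful $\mathscr G$-module $\mathscr S(M)$, then invoke the characteristic-zero equivalence of ``reductive,'' ``linearly reductive,'' and ``admits a faithful completely reducible module.'' Your extra sketch via the unipotent radical is just an in-line proof of that last standard criterion, which the paper simply cites.
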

\begin{proof}   Let $M$ be a completely reducible $\delta-F-$module. We observe that
	a linear algebraic group $\mathscr G$ defined over a field of characteristic zero is  reductive if and only if it is linearly reductive, which in turn holds if and only if there exists a faithful completely reducible $\mathscr G-$module. Since $\mathscr S(M)$ is a faithful $\mathscr G-$module. it is enough to show that  $\mathscr S(M)$ is a competely reducible $\mathscr G-$module.

If   $W$ is a $\mathscr{G}-$submodule of $\mathscr S(M)$ then from the Tannakian equivalence of $\{\{M\}\}$ and $\mathrm{Rep}_{\mathscr{G}},$ there is a module $N_1\in \{\{M\}\}$ with $i: N_1\hookrightarrow M$ such that $\mathscr S(i):\mathscr S(N_1)=W \hookrightarrow\mathscr S(M).$ Now let $N'$ be a submodule of $M$ such that  $M=i(N_1)\oplus N'$ and observe that $\mathscr S(M)=W\oplus \mathscr S(N').$ This prove that $\mathscr S(M)$ is a completely reducible $\mathscr G-$module.   Converse is also proved in a similar way. \end{proof}
	

\section{minimal differential splitting fields of differential central simple algebras}\label{DCSA}
Let $A$ be a central simple $F-$algebra  with a derivation map $\delta_A.$   For any $f\in F,$ we have $fx=xf$ for all $x\in A$ and therefore, applying $\delta_A$ to this equation, we obtain $\delta_A(f)x=x\delta_A(f)$ for all $x\in A$ and that $\delta_A(f)\in F.$ This implies $A$ can be viewed as a  $\delta-F-$central simple algebra.   On the other hand,  a theorem of Hochschild (\cite[Theorem]{Hoc55}) shows any derivation on $F$ can be extended to a derivation on $A$ proving that over any $\delta-$field $F,$ every central simple $F-$algebra becomes a $\delta-F-$central simple algebra.    If $F$ is endowed with the zero derivation $\theta: F\to F,$ $\theta(f)=0$ for all $f\in F,$ then for every $u\in A,$ the derivation map  $\mathrm{inn}_u: A\to A,$ given by $\mathrm{inn}_u(a)=au-ua,$  makes $(A, \mathrm{inn}_u)$ into a $\delta-F-$central simple algebra. Conversely, if $A$ is a $\delta-F-$central simple algebra and the derivation $\delta_A$ restricts to the zero derivation on $F$, then $\delta_A=\mathrm{inn}_a$ for some $a\in A$ (\cite[Theorem 8]{Jac37}). Thus, if $(A, \delta_1)$ and $(A, \delta_2)$ are $\delta-F-$central simple algebras  then since $\delta_1-\delta_2=\theta$ on $F,$ we obtain, $$\delta_1-\delta_2=\mathrm{inn}_u, \quad \text{for some }\ u\in A.$$

We recall that the matrix algebra $\mathrm{M}_n(F)$ has the coordinate-wise derivation map $$\delta^c: \mathrm{M}_n(F)\to \mathrm{M}_n(F) \ \ \text{given by}\  \delta^c(a_{ij})= (\delta_F(a_{ij}))$$ and that a $\delta-F-$central simple algebra $A$  splits over a  $\delta-$field extension $K$ of $F$ if there is a $\delta-K-$algebra isomorphism $\phi: (A\otimes K, \delta_A \otimes\delta_K)\to (\mathrm{M}_n(K), \delta^c).$ That is, $\phi$ is an isomorphism of  $K-$algebras such that $\phi\left((\delta_A\otimes \delta_K)(x) \right)=\delta^c(\phi(x))$ for all $x\in A\otimes K.$  

Let $K$ be a $\delta-$field extension of $F.$ We shall make the following simple yet important observations. The injective map of $\delta-K-$modules \begin{equation*}\label{mu-mapofalgebras}\mu_A: \left(\mathscr S(A)\otimes_{K^{\delta}} K,\  \theta\otimes_{F^{\delta}}\delta_K\right)\to (A\otimes K, \delta_A\otimes \delta_K)$$ is also a map of $\delta-K-$algebras:  $$\mu_A\left((x\otimes k_1)(y\otimes k_2)\right)=\mu_A(xy\otimes k_1k_2)=xy (1\otimes k_1)(1\otimes k_2)=x(1\otimes k_1)y(1\otimes k_2)=\mu_A(x\otimes k_1)\mu_A(x\otimes k_2).\end{equation*}
Furthermore, if $K$ is a Picard-Vessiot extension of $F$ with $\delta-$Galois group $\mathscr G$ then the $\delta-F-$algebra isomorphism $\mu_A$  is $\mathscr G-$equivariant under the actions described in Equations (\ref{actiononsinglecoordinate}) and (\ref{actiononbothcoordinates}). 


\subsection{Trivializations and splittings}

\begin{proposition}\label{constants-matrixalgebra} For a $\delta-F-$central simple algebra $A$  with $n:=\sqrt{\mathrm{dim}_F(A)},$ the following statements are equivalent.
	\begin{enumerate}[(i)] \item $A$ is a split $\delta-F-$algebra. That is, $(A, \delta_A)\cong (\mathrm{M}_n(F), \delta^c)$ as $\delta-F-$algebras.
		\item $A^{\delta}\cong \mathrm{M}_n(F^{\delta})$ as $F^{\delta}-$algebras.
		\item $A^{\delta}$ has a subalgebra which is isomorphic to $\mathrm{M}_n(F^{\delta})$ as $F^{\delta}-$algebras.
		\end{enumerate}
	\end{proposition}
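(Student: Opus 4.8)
The plan is to establish the cycle $(i)\Rightarrow(ii)\Rightarrow(iii)\Rightarrow(i)$; the first two implications are soft, and the third carries the content. For $(i)\Rightarrow(ii)$, I would take a $\delta-F-$algebra isomorphism $\psi\colon(A,\delta_A)\to(\mathrm{M}_n(F),\delta^c)$ and restrict it to the kernels of the derivations: since $\psi\circ\delta_A=\delta^c\circ\psi$, it carries $A^\delta$ bijectively onto $\mathrm{M}_n(F)^{\delta^c}$, and $\mathrm{M}_n(F)^{\delta^c}=\mathrm{M}_n(F^\delta)$ because $\delta^c$ acts entrywise. As $\psi$ is $F-$linear it is $F^\delta-$linear and sends scalars to scalars, so this restriction is an isomorphism of $F^\delta-$algebras. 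The implication $(ii)\Rightarrow(iii)$ is trivial: take the subalgebra to be $A^\delta$ itself.

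For $(iii)\Rightarrow(i)$, suppose $\phi\colon\mathrm{M}_n(F^\delta)\xrightarrow{\;\sim\;}B$ is an $F^\delta-$algebra isomorphism onto a subalgebra $B\subseteq A^\delta$. Because $F$ is central in $A$ and $F^\delta\subseteq F$, the map $(m,f)\mapsto\phi(m)f$ is $F^\delta-$balanced and multiplicative, hence induces an $F-$algebra homomorphism
\[
\Phi\colon\ \mathrm{M}_n(F^\delta)\otimes_{F^\delta}F\ \longrightarrow\ A,\qquad \Phi(m\otimes f)=\phi(m)f.
\]
Under the standard identification $\mathrm{M}_n(F^\delta)\otimes_{F^\delta}F\cong\mathrm{M}_n(F)$ (with $(f_{ij})$ corresponding to $\sum_{i,j}E_{ij}\otimes f_{ij}$, the matrix units $E_{ij}$ lying in $\mathrm{M}_n(F^\delta)$), $\Phi$ is a nonzero homomorphism out of the simple algebra $\mathrm{M}_n(F)$ and is therefore injective; since $\dim_F\mathrm{M}_n(F)=n^2=\dim_F A$, it is bijective. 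A bijective multiplicative $F-$linear map automatically sends $1$ to $1$, so in fact $1_B=1_A$ and $\Phi$ is an isomorphism of $F-$algebras $\mathrm{M}_n(F)\cong A$. (If one prefers, one adopts the convention that subalgebras are unital from the start, and this last remark is unnecessary.)

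Finally I would verify that $\Phi$ intertwines the derivations. Under the identification above $\delta^c$ corresponds to $1\otimes\delta_F$, and for $m\in\mathrm{M}_n(F^\delta)$, $f\in F$ the Leibniz rule together with $\phi(m)\in B\subseteq A^\delta$ gives
\[
\delta_A\bigl(\Phi(m\otimes f)\bigr)=\delta_A\bigl(\phi(m)f\bigr)=\delta_A(\phi(m))\,f+\phi(m)\,\delta_F(f)=\phi(m)\,\delta_F(f)=\Phi\bigl((1\otimes\delta_F)(m\otimes f)\bigr).
\]
Hence $\Phi\colon(\mathrm{M}_n(F),\delta^c)\to(A,\delta_A)$ is an isomorphism of $\delta-F-$algebras, which is $(i)$. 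I do not expect any real obstacle: the idea is simply that multiplying a constant copy of $\mathrm{M}_n(F^\delta)$ by the scalars $F$ fills out all of $A$ for dimension reasons, and the constancy of $B$ is precisely what forces the multiplication map to commute with the derivations; the only point needing a moment's care is the unitality of $B$, which is anyway forced by the dimension count. This equivalence is the differential analogue of the classical fact that a central simple $F-$algebra of degree $n$ splits if and only if it contains a copy of $\mathrm{M}_n(F)$.
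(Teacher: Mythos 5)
Your proof is correct and follows essentially the same route as the paper: both hinge on showing that the multiplication map from $\mathrm{M}_n(F^{\delta})\otimes_{F^{\delta}}F$ into $A$ is a $\delta-F-$algebra isomorphism, using the constancy of the embedded copy of $\mathrm{M}_n(F^{\delta})$ to see that it intertwines $\delta^c$ with $\delta_A$. The only (harmless) variation is in how injectivity is obtained: you use simplicity of $\mathrm{M}_n(F)$, whereas the paper invokes the injectivity of $\mu_A\colon A^{\delta}\otimes_{F^{\delta}}F\to A$ (linear independence of constants over $F$) together with the same dimension count.
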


\begin{proof} $(i)\implies (ii).$ The ring of constants of $\mathrm{M}_n(F)$ under the derivation $\delta^c$ is the $F^{\delta}-$ algebra $\mathrm{M}_n(F^{\delta})$ and therefore, we must have $A^{\delta}\cong \mathrm{M}_n(F^{\delta})$ as $F^{\delta}-$algebras. 
	\item $(ii)\implies (iii)$ is obvious. 
	\item $(iii)\implies (i).$  Assume that $A^{\delta}$ has a $F^{\delta}-$subalgebra  $B$ isomorphic to $\mathrm{M}_n(F^{\delta}).$ Then $A^{\delta},$ as a $F^{\delta}-$vector space is of dimension $\geq n^2.$ Since the multiplication map $\mu_A: A^{\delta} \otimes_{F^\delta} F\to A$ is injective and $\mathrm{dim}_F(A )=n^2,$ we have $\mu_A$ is an isomorphism and that $A^{\delta}=B\cong \mathrm{M}_n(F^{\delta})$ as $F^{\delta}-$algebras. Thus, $$(\mathrm{M}_n(F), \delta^c)\cong \left(\mathrm{M}_n(F^{\delta})\otimes_{F^{\delta}} F, \delta^c\otimes \delta_F\right)\cong \left(A^{\delta} \otimes_{F^{\delta}} F, \theta\otimes  \delta_F\right)\cong (A, \delta_A),$$ where $\theta$ is the zero derivation and the isomorphisms are  $\delta-F-$algebra isomorphisms. 	\end{proof}

The next three corollaries contain the proofs of Theorem \ref{Mainresult}(\ref{split-trivial-equivalence}) and (\ref{existence-minimal}).

\begin{corollary}\label{splitimpliestrivial}
	Let $A$ be a $\delta-F-$central simple algebra with $n:=\sqrt{\mathrm{dim}_F(A)}.$ If $A$ is a split $\delta-F-$algebra then $A$ is a trivial $\delta-F-$module.
	\end{corollary}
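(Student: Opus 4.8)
The plan is to reduce everything to Proposition \ref{constants-matrixalgebra}. Recall that a $\delta-F-$module $N$ is trivial precisely when $\dim_F(N) = \dim_{F^{\delta}}(N^{\delta})$, and that this equality is clearly preserved under any $\delta-F-$module isomorphism: such an isomorphism is $F-$linear on the whole modules and restricts to an $F^{\delta}-$linear isomorphism between the modules of constants. Since a $\delta-F-$algebra isomorphism is in particular a $\delta-F-$module isomorphism, it therefore suffices to exhibit one split $\delta-F-$algebra, namely $(\mathrm{M}_n(F), \delta^c)$ itself, as a trivial $\delta-F-$module.

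For that I would observe that the module of constants of $(\mathrm{M}_n(F), \delta^c)$ is exactly $\mathrm{M}_n(F^{\delta})$: a matrix $(a_{ij})$ is killed by $\delta^c$ if and only if each entry $a_{ij}$ lies in $F^{\delta}$. Hence $\dim_{F^{\delta}}\left(\mathrm{M}_n(F)^{\delta^c}\right) = n^2 = \dim_F(\mathrm{M}_n(F))$, so $(\mathrm{M}_n(F), \delta^c)$ is trivial, and combining with the previous paragraph gives the corollary. Equivalently --- and this is the route I would actually write --- one invokes the implication $(i)\Rightarrow(ii)$ of Proposition \ref{constants-matrixalgebra}: if $A$ is a split $\delta-F-$algebra then $A^{\delta}\cong \mathrm{M}_n(F^{\delta})$ as $F^{\delta}-$algebras, whence $\dim_{F^{\delta}}(A^{\delta}) = n^2 = \dim_F(A)$, which is the definition of $A$ being a trivial $\delta-F-$module.

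There is essentially no obstacle here; the only point requiring a word of care is the bookkeeping that ``trivial'' is a purely module-theoretic notion, insensitive to the extra algebra structure, which is precisely why the ($\delta-F-$algebra) splitting hypothesis can be fed into Proposition \ref{constants-matrixalgebra}. This statement is really the easy half of Theorem \ref{Mainresult}(\ref{split-trivial-equivalence}), isolated here for later reference.
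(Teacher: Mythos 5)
Your proposal is correct and its main route --- invoking $(i)\Rightarrow(ii)$ of Proposition \ref{constants-matrixalgebra} to get $A^{\delta}\cong \mathrm{M}_n(F^{\delta})$ and hence $\dim_{F^{\delta}}(A^{\delta})=n^2=\dim_F(A)$ --- is exactly the paper's proof. The preliminary remark that triviality is a module-theoretic notion preserved under $\delta-F-$isomorphism is a harmless elaboration of the same idea.
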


\begin{proof}
From Proposition \ref{constants-matrixalgebra},	 we know that $A^{\delta}\cong \mathrm{M}_n(F^{\delta})$ and therefore $\mathrm{dim}_{F^{\delta}}(A^{\delta}) =n^2.$ Thus $A$ is a trivial $\delta-F-$module.  
	\end{proof}

\begin{corollary}	\label{trivial-split} Let $A$ be a $\delta-F-$central simple algebra. If  $A$ is a trivial $\delta-F-$ module  then $A^{\delta}$ is a central simple $F^{\delta}-$algebra and furthermore, if $F^{\delta}$ is an algebraically closed field then $A$ is a split $\delta-F-$central simple algebra.
\end{corollary}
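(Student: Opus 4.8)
The plan is to exploit the $\delta$-$F$-algebra isomorphism $\mu_A\colon (A^\delta\otimes_{F^\delta}F,\ \theta\otimes\delta_F)\to (A,\delta_A)$, which is an isomorphism precisely because $A$ is a trivial $\delta$-$F$-module (so $\dim_{F^\delta}A^\delta=\dim_F A=n^2$) and because we observed earlier that $\mu_A$ respects multiplication. From this isomorphism of $F$-algebras we get $A\cong A^\delta\otimes_{F^\delta}F$, so $A^\delta$ is an $F^\delta$-form of a central simple $F$-algebra; the first step is to deduce that $A^\delta$ is itself central simple over $F^\delta$. For centrality: an element $z\in A^\delta$ central in $A^\delta$ stays central in $A^\delta\otimes_{F^\delta}F\cong A$ (scalars from $F$ commute with everything), hence lies in $F\cap A^\delta=F^\delta$. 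For simplicity: a two-sided ideal $\mathfrak b\subsetneq A^\delta$ would generate a proper two-sided ideal $\mathfrak b\otimes_{F^\delta}F$ of $A^\delta\otimes_{F^\delta}F\cong A$, contradicting simplicity of $A$ (here one uses that $F^\delta\hookrightarrow F$ is faithfully flat, so $\mathfrak b\otimes F\ne A^\delta\otimes F$). Finite-dimensionality of $A^\delta$ over $F^\delta$ is immediate from the dimension count. Thus $A^\delta$ is a central simple $F^\delta$-algebra.

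For the second assertion, assume $F^\delta$ is algebraically closed. A finite-dimensional central simple algebra over an algebraically closed field is split, i.e.\ $A^\delta\cong \mathrm{M}_n(F^\delta)$ as $F^\delta$-algebras (the only finite-dimensional division algebra over an algebraically closed field is the field itself, by Wedderburn's theorem). Now invoke Proposition \ref{constants-matrixalgebra}: condition (ii) holds, hence condition (i) holds, i.e.\ $(A,\delta_A)\cong(\mathrm{M}_n(F),\delta^c)$ as $\delta$-$F$-algebras, so $A$ is a split $\delta$-$F$-central simple algebra.

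I do not expect a serious obstacle here; the corollary is essentially a packaging of Proposition \ref{constants-matrixalgebra} together with the elementary descent statement that an $F^\delta$-form of a central simple $F$-algebra is central simple over $F^\delta$. The one point that needs a little care is the simplicity part of that descent: one should phrase it via faithful flatness of $F/F^\delta$ (or, equivalently, pick an $F^\delta$-basis of $F$ and chase a nonzero element of an ideal through the tensor product) rather than waving at it, so that the passage $\mathfrak b\mapsto\mathfrak b\otimes_{F^\delta}F$ genuinely reflects properness of ideals. Everything else is dimension counting and the cited structure theorems.
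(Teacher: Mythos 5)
Your proposal is correct and follows essentially the same route as the paper: both rest on the fact that triviality makes $\mu_A\colon A^\delta\otimes_{F^\delta}F\to A$ a $\delta$-$F$-algebra isomorphism, deduce simplicity and centrality of $A^\delta$ by descending ideals and the center through this isomorphism (the paper uses a dimension count where you invoke faithful flatness of $F/F^\delta$, but these are the same elementary observation), and then conclude via Wedderburn and Proposition \ref{constants-matrixalgebra}((ii)$\Rightarrow$(i)).
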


\begin{proof}
Let $A$ be a trivial $\delta-F-$module. Then   $\mu_A: A^{\delta}\otimes_{F^{\delta}}F\to A$ is a $\delta-F-$algebra isomorphism. If $I$ is a nonzero ideal of $A^{\delta}$ then $I\otimes_{F^{\delta}}F$ is a nonzero ideal of $A^{\delta}\otimes_{F^{\delta}}F.$  Now since $A$ is simple, $\mu_A(I\otimes_{F^{\delta}}F)=A$ and therefore, $\mathrm{dim}_{F^{\delta}}(A^{\delta})=\mathrm{dim}_F(A)=\mathrm{dim}_F(I\otimes_{F^{\delta}}F) =\mathrm{dim}_{F^{\delta}}(I).$ In particular, $I=A^{\delta}$ and thus $A^{\delta}$ is a simple algebra. 

Let the center of $A^{\delta}$ be $Z.$ Then $Z$ is a $F^{\delta}-$vector space and $\mu_A(Z\otimes_{F^{\delta}}F)$ must be contained in the  center of $A,$ which is $F.$ Therefore, $\mathrm{dim}_F(\mu_A(Z\otimes_{F^{\delta}}F))=1$ and we have $Z=F^{\delta}.$	Thus, $A^{\delta}$ must be a $F^{\delta}-$central simple algebra. Since $F^{\delta}$ is assumed to be an algebraically closed field, we have $A^{\delta}\cong \mathrm{M}_n(F^{\delta})$ as $F^{\delta}-$algebras, where $n=\sqrt{\mathrm{dim}_F(A)}$. Now from Proposition \ref{constants-matrixalgebra}, we obtain that $A$ is a split $\delta-F-$algebra.\end{proof}

\begin{corollary}\label{trivial-split-PV} Let $A$ be a $\delta-F-$central simple algebra  and  $F^{\delta}$ be an algebraically closed field. Let $L$ be a $\delta-$field extension of $F$ with $F^{\delta}=L^\delta.$   Then, the $\delta-F-$ algebra  $A$ is split by the $\delta-$field $L$  if and only if $L$ contains a Picard-Vessiot extension $K$ for the $\delta-F-$module $A;$ in particular, $K$ also splits the $\delta-F-$central simple algebra $A.$ 
\end{corollary}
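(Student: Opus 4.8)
The plan is to prove the two implications separately, using the chain of equivalences established in Proposition \ref{constants-matrixalgebra} together with Corollaries \ref{splitimpliestrivial} and \ref{trivial-split}, and the alternate characterization of a Picard-Vessiot extension via triviality of $M\otimes K$. Recall first from Theorem \ref{Mainresult}(\ref{split-trivial-equivalence})---whose proof is exactly the content of the preceding two corollaries---that $L$ splits $A$ if and only if $(A\otimes_F L, \delta_A\otimes\delta_L)$ is a trivial $\delta-L-$module. So the task reduces to showing: $(A\otimes_F L,\delta_A\otimes\delta_L)$ is a trivial $\delta-L-$module if and only if $L$ contains a Picard-Vessiot extension $K$ for the $\delta-F-$module $A$.

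For the ``if'' direction, suppose $L$ contains a Picard-Vessiot extension $K$ for the $\delta-F-$module $A$. By property (\ref{muisomorphism}) of the alternate definition of a Picard-Vessiot extension, $(A\otimes_F K,\delta_A\otimes\delta_K)$ is a trivial $\delta-K-$module; pick a $K-$basis $x_1,\dots,x_{n^2}$ of $A\otimes_F K$ lying in $(A\otimes_F K)^{\delta_A\otimes\delta_K}$. Then $x_1\otimes 1,\dots,x_{n^2}\otimes 1$, viewed inside $A\otimes_F L \cong (A\otimes_F K)\otimes_K L$, form an $L-$basis consisting of constants for $\delta_A\otimes\delta_L$ (here we use $K\subseteq L$ and that $\delta_L$ restricts to $\delta_K$ on $K$). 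Hence $A\otimes_F L$ is a trivial $\delta-L-$module, so by Theorem \ref{Mainresult}(\ref{split-trivial-equivalence}) (equivalently, by Corollary \ref{trivial-split} applied over $L$, whose field of constants is $L^\delta=F^\delta$ and hence algebraically closed) the $\delta-L-$algebra $A\otimes_F L$ is split; that is, $L$ splits $A$. The same application of Corollary \ref{trivial-split} with $L$ replaced by $K$ shows $K$ itself splits $A$, giving the final clause.

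For the ``only if'' direction, suppose $L$ splits $A$. By Corollary \ref{splitimpliestrivial} applied over $L$, the module $(A\otimes_F L,\delta_A\otimes\delta_L)$ is a trivial $\delta-L-$module, so it admits an $L-$basis $v_1,\dots,v_{n^2}\in (A\otimes_F L)^{\delta_A\otimes\delta_L}$. Fix an $F-$basis $e_1,\dots,e_{n^2}$ of $A$ and let $\delta^c(Y)=(a_{ij})Y$ be the associated matrix differential equation. Writing each $v_j$ in terms of $e_1\otimes 1,\dots,e_{n^2}\otimes 1$ produces, via the isomorphism $\lambda$ of the Tannakian subsection, a fundamental matrix $(\ell_{ij})\in\mathrm{GL}_{n^2}(L)$ for $\delta^c(Y)=(a_{ij})Y$. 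Let $K_0\subseteq L$ be the $\delta-$subfield of $L$ generated over $F$ by the entries $\ell_{ij}$. Then $K_0^\delta = F^\delta$ (since $F^\delta\subseteq K_0^\delta\subseteq L^\delta = F^\delta$), the matrix $(\ell_{ij})$ is a fundamental matrix over $K_0$, and $K_0$ is generated over $F$ by its entries; checking conditions (1)--(3) of the original definition of a Picard-Vessiot extension, $K_0$ is a Picard-Vessiot extension for the $\delta-F-$module $A$, and $K_0\subseteq L$. Taking $K=K_0$ completes the proof.

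The main obstacle, and the place requiring the most care, is the ``only if'' direction: one must extract from a mere splitting of $A$ over $L$ an honest fundamental matrix over $L$ and then verify that the subfield it generates satisfies all three defining axioms of a Picard-Vessiot extension---in particular that it has no new constants, which follows only because we assumed $L^\delta = F^\delta$. A secondary subtlety is the base-change identity $(A\otimes_F K)\otimes_K L\cong A\otimes_F L$ as $\delta-L-$modules used in the ``if'' direction, which is routine but should be stated, together with the observation that triviality passes along this base change because constants of $A\otimes_F K$ remain constants in $A\otimes_F L$.
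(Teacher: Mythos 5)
Your proposal is correct and follows essentially the same route as the paper: for the ``only if'' direction, both arguments take an $F^{\delta}-$basis of $(A\otimes L)^{\delta_A\otimes\delta_L}$, expand it against an $F-$basis of $A$, and let $K$ be the $\delta-$subfield of $L$ generated by the resulting coefficients, then verify the Picard-Vessiot axioms (the paper via the alternate Tannakian-style definition, you via the fundamental-matrix definition --- equivalent by the $\lambda$ isomorphism the paper sets up); the ``if'' direction in both cases is base change plus Corollaries \ref{splitimpliestrivial} and \ref{trivial-split}.
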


\begin{proof} Suppose that $L$ splits the $\delta-F-$algebra $A.$ Then the $\delta-L-$module $(A\otimes L, \delta_A\otimes\delta_L)$ is trivial and we have $\mathrm{dim}_{F^\delta}((A\otimes L)^{\delta_A \otimes \delta_L})=\mathrm{dim}_L(A\otimes L).$ Let $e_1,\dots, e_{n^2}$ be an $F-$basis of $A$ and $x_1,\dots, x_{n^2}$ be an $F^{\delta}-$basis of $(A\otimes L)^{\delta_A\otimes \delta_L}.$ Write \begin{equation}\label{expressionofconstants}x_j=\sum^{n^2}_{i=1}e_i\otimes\lambda_{ij}; \qquad \lambda_{ij}\in L\end{equation} 
	and consider the $\delta-$field $K$ generated over $F$ by $\{\lambda_{ij}\in L\ | 1\leq i, j\leq n^2 \}.$  Then since $K$ is a $\delta-$subfield of $L,$ $K^\delta=L^\delta.$ From Equation (\ref{expressionofconstants}), each $x_i$ belongs to $A\otimes K$ and therefore  $(A\otimes K, \delta_A\otimes \delta_K)$ is  a trivial $\delta-K-$module. Thus $K$ satisfies the equivalent definition of a Picard-Vessiot extension for the $\delta-F-$module $A$ given in  Section \ref{tannakianformalism}. Rest of the proof follows from Corollaries \ref{splitimpliestrivial} and \ref{trivial-split}.
\end{proof}

\begin{example} \label{pvminimality}
	Consider the rational function field $F:=\mathbb C(x)$ over the field of complex numbers with the derivation $\delta_F:=d/dx$ and the $\delta-F-$module $M:=M_2(F),$ where   $\delta_M:=  \delta^c+\mathrm{inn}_P$ for $$P:=\begin{pmatrix}\frac{1}{4x}&0\\ 0& -\frac{1}{4x}\end{pmatrix}.$$ 
	
	The $\delta-$field $E=F(\sqrt[4]{x})$ with $\delta_E=d/dx$ has a fundamental matrix \begin{equation}\label{fundmat-example}Z:=\begin{pmatrix}\sqrt[4]{x}&0\\0&\frac{1}{\sqrt[4]{x}}\end{pmatrix}\in \mathrm{GL}_n(E)\end{equation} 
	for the differential equation $\delta^c(Y)=PY.$ Since $\delta^c(Z)=PZ,$  the inner automorphism $X\mapsto ZXZ^{-1}$ of $M_2(E)$ becomes a  $\delta-E-$algebra automorphism between $(M_2(E), \delta^c)$ and $(M_2(E), \delta^c+\mathrm{inn}_P).$
	
	Now we shall show that the Picard-Vessiot extension for the $\delta-F-$module $M$ embeds in $E.$ Consider the $F-$basis of elementary matrices $\mathtt{E}_{ij},$ having $1$ in the $ij-$th position and $0$ elsewhere. Then $(\delta^c_E+\mathrm{inn}_P)(\mathtt{E}_{ij})=$ $\mathrm{inn}_P(\mathtt{E}_{ij})$ and the associated matrix differential equation of $M$ with respect to the (ordered) basis $\mathtt{E}_{11}, \mathtt{E}_{12}, \mathtt{E}_{21}, \mathtt{E}_{22}$ is given by the equation $$\delta^c(Y)=\begin{pmatrix}0& 0& 0& 0\\ 0& \frac{1}{2x}& 0 &0\\ 0&0&-\frac{1}{2x}&0\\ 0&0&0&0\end{pmatrix}Y.$$
	
	Clearly, the matrix	$$\begin{pmatrix}1& 0& 0& 0\\ 0& \sqrt{x}& 0 &0\\ 0&0&\frac{1}{\sqrt{x}}&0\\ 0&0&0&1\end{pmatrix}\in\mathrm{GL}_n(E),$$ whose entries generate the $\delta-$field $K:=F(\sqrt{x}),$  is a fundamental matrix of the above differential equation.  Thus $K$ is a Picard-Vessiot extension for the $\delta-F-$module $M,$ $F\subset K\subset E,$ and $$(M_2(K),\delta^c+\mathrm{inn}_P )\cong (M_2(K), \delta^c)$$ as $\delta-K-$algebras.
\end{example}

\begin{remark} We shall now build on Example \ref{pvminimality} to point out an inaccuracy (previously noted by Yidi Wang) in \cite[Proposition  2]{JM08}, which states that for any trace zero matrices $P, Q\in \mathrm{M}_n(K),$ the $\delta-K-$algebras $(\mathrm{M}_n(K), \delta^c_K+\mathrm{inn}_P)$ and $(\mathrm{M}_n(K), \delta^c_K+\mathrm{inn}_Q)$ are isomorphic if and only if $P$ is gauge equivalent to $Q$ over $K.$ From Example \ref{pvminimality},  we know that the $\delta-K-$algebras $(M_2(K), \delta^c_K+\mathrm{inn}_P)$ and $(M_2(K), \delta^c_K)$ are isomorphic. We shall now show that  $P$ is not gauge equivalent to the zero matrix. Assume on the contrary that there is a matrix $Z_1\in \mathrm{GL}_n(K)$ such that  $Z^{-1}_1\delta^c(Z_1)-Z^{-1}_1PZ_1=0.$ Then $\delta^c(Z_1)=PZ_1.$ Since $\delta^c(Z)=PZ,$  we have  $Z=Z_1C$ for some $C\in \mathrm{GL}_n(F^{\delta}).$ This would imply $\sqrt[4]{x}\in K=F(\sqrt{x}),$ a contradiction. 
	The error in the proof of \cite[Proposition  2]{JM08} is due to the assumption that one can always multiply the matrix $H\in\mathrm{GL}_n(K)$ (as defined in their proof) with a nonzero scalar matrix and obtain a matrix in $\mathrm{SL}_n(K).$ This assumption is incorrect as it holds only when $K$ contains an $n-$th root of $\mathrm{det}(H).$   
\end{remark}

\subsection{The $\delta-F-$Brauer Monoid.} We shall recall the definition of a $\delta-F-$Brauer monoid from \cite{Mag23}. Let $F$ be a $\delta-$field.  On the collection of all $\delta-F-$central simple algebras, consider the equivalence relation $$(A, \delta_A)\equiv (B, \delta_B) \quad \text{if} \ (\mathrm{M}_n(A), \delta^c)\cong (M_m(B), \delta^c) \ \text{for some}\ m,n\in \mathbb{N}\ \text{as }\ \delta-F-\text{algebras.} $$

The equivalence class of $\delta-F-$central simple algebras forms a monoid, called the $\delta-F$ \emph{Brauer monoid},  with respect to the tensor product $\otimes.$  The next Proposition is a special case of \cite[Theorem 1]{Mag23}, which we prove using the results  obtained in this section.
\begin{proposition} \label{torsionelements}
	Every torsion element of the $\delta-F-$Brauer monoid is represented by a $\delta-F-$algebra of the form $(B\otimes_{F^{\delta}} F, \theta\otimes \delta_F),$ where $B$ is central simple $F^{\delta}-$algebra with the zero derivation $\theta$. In particular,  if $F^{\delta}$ is an algebraically closed field then the $\delta-F-$Brauer monoid of $F$ is torsion free.
\end{proposition}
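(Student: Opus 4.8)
The strategy is to show that a torsion class consists entirely of trivial $\delta-F-$modules, and then to invoke Corollary \ref{trivial-split}. Suppose $[A]$ is a torsion element, say $[A]^k=[F]$ for some $k\geq 1$; here $[F]$, the class of $(F,\delta_F)$, is the identity of the monoid and the operation is $\otimes_F$, so this means that $A^{\otimes k}:=A\otimes_F\cdots\otimes_F A$ ($k$ copies) is Brauer equivalent to $(F,\delta_F)$, i.e.\ $(\mathrm{M}_n(A^{\otimes k}),\delta^c)\cong(\mathrm{M}_m(F),\delta^c)$ as $\delta-F-$algebras for some $m,n\in\mathbb{N}$. (Here a torsion element means one a positive power of which equals the identity class $[F]$, so that $[A]$ is a torsion unit of the monoid.)

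First I would observe that $A^{\otimes k}$ is a trivial $\delta-F-$module. Indeed, $(\mathrm{M}_m(F),\delta^c)$ is a trivial $\delta-F-$module: its ring of constants is $\mathrm{M}_m(F^{\delta})$, of $F^{\delta}-$dimension $m^2=\mathrm{dim}_F(\mathrm{M}_m(F))$ (this is also Corollary \ref{splitimpliestrivial}); hence the isomorphic module $(\mathrm{M}_n(A^{\otimes k}),\delta^c)$ is trivial too. The map $c\mapsto c\,\mathtt{E}_{11}$ is an injective homomorphism of $\delta-F-$modules $A^{\otimes k}\hookrightarrow\mathrm{M}_n(A^{\otimes k})$ (it is $F-$linear, and $\delta-$equivariant because the matrix unit $\mathtt{E}_{11}$ has constant entries), so $A^{\otimes k}$ is isomorphic to a $\delta-F-$submodule of a trivial $\delta-F-$module. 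Since submodules of trivial $\delta-F-$modules are again trivial (established above), $A^{\otimes k}$ is a trivial $\delta-F-$module.

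The main step is to descend from $A^{\otimes k}$ to $A$ itself. The diagonal inclusion $\iota\colon A\to A^{\otimes k}$, $a\mapsto a\otimes 1\otimes\cdots\otimes 1$, is a unital injective homomorphism of $\delta-F-$algebras, hence in particular of $\delta-F-$modules — the $\delta-$equivariance is precisely the identity $\delta_A(1)=0$ applied in each factor. Therefore $A$ is isomorphic, via $\iota$, to a $\delta-F-$submodule of the trivial module $A^{\otimes k}$, and so $A$ is itself a trivial $\delta-F-$module. By Corollary \ref{trivial-split}, $B:=A^{\delta}$ is then a central simple $F^{\delta}-$algebra and $\mu_A\colon(B\otimes_{F^{\delta}}F,\theta\otimes\delta_F)\to(A,\delta_A)$ is an isomorphism of $\delta-F-$algebras; thus $[A]=[(B\otimes_{F^{\delta}}F,\theta\otimes\delta_F)]$, which is the first assertion. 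For the last sentence: if $F^{\delta}$ is algebraically closed, its Brauer group is trivial, so $B\cong\mathrm{M}_p(F^{\delta})$ with $p^2=\mathrm{dim}_{F^{\delta}}(B)$, whence $(B\otimes_{F^{\delta}}F,\theta\otimes\delta_F)\cong(\mathrm{M}_p(F),\delta^c)$ and $[A]=[F]$; so the identity is the only torsion element, i.e.\ the $\delta-F-$Brauer monoid is torsion free.

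I do not expect a genuine obstacle: the one thing that needs care — and which is really the whole idea — is to exhibit the two $\delta-F-$module embeddings (the corner embedding $A^{\otimes k}\hookrightarrow\mathrm{M}_n(A^{\otimes k})$ and the diagonal embedding $A\hookrightarrow A^{\otimes k}$) and to verify that they are honest morphisms of $\delta-F-$modules, so that the fact ``a submodule of a trivial $\delta-F-$module is trivial'' applies. Everything else is routine use of Corollaries \ref{splitimpliestrivial} and \ref{trivial-split} together with the definition of the $\delta-F-$Brauer monoid.
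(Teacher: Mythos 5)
Your proposal is correct and follows essentially the same route as the paper: show $A^{\otimes k}$ is a trivial $\delta-F-$module, use the diagonal embedding $a\mapsto a\otimes 1\otimes\cdots\otimes 1$ together with the fact that $\delta-F-$submodules of trivial modules are trivial to conclude $A$ itself is trivial, and then apply $\mu_A$ and Corollary \ref{trivial-split}. Your treatment of the first step is in fact slightly more careful than the paper's (which asserts directly that $A^{\otimes m}$ is split, whereas you pass through the corner embedding into $\mathrm{M}_n(A^{\otimes k})$ to respect the definition of Brauer equivalence), but this is a refinement of the same argument, not a different one.
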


\begin{proof}
	Let $(A, \delta_A)$ represent a torsion element in the $\delta-F-$Brauer monoid with $n:=\sqrt{\mathrm{dim}_F(A)}.$ Then there is a positive integer $m$ such that $(A^{\otimes m}, \delta^{\otimes m}_A)$ is a split $\delta-F-$algebra. Now from Corollary \ref{splitimpliestrivial}, $(A^{\otimes m}, \delta^{\otimes m}_A)$ is a trivial $\delta-F-$module. Note that the natural map $$(A,\delta_A)\hookrightarrow A\otimes 1\otimes\cdots \otimes 1\subset (A^{\otimes m}, \delta^{\otimes m}_A)$$ is a $\delta-F-$algebra monomorphism. Now since   $(A^{\otimes m}, \delta^{\otimes m}_A)$ is a trivial $\delta-F-$module, $(A, \delta_A)$ must be a trivial $\delta-F-$module as well. This implies $\mu_A:A^{\delta}\otimes_{F^{\delta}} F\to A$ is an isomorphism of $\delta-F-$algebras. The last assertion of the proposition follows from Corollary \ref{trivial-split}.	\end{proof}
	

In \cite[Theorem 5.1]{KS-2003}, it is shown that torsion elements in the $\delta-F-$Brauer monoid of the rational function field $F(x)$ with the derivation $x\frac{d}{dx}$ is split by a finite extension.  In the following theorem, we generalize  this result for an arbitrary 
$\delta-$field $F$ of characteristic zero.
\begin{theorem} Let $F$ be a $\delta-$field of characteristic zero.
 If a $\delta-F-$central simple algebra $A$ represents a torsion element in the $\delta-F-$Brauer monoid then there is a finite algebraic extension $K$ of $F$ such that  the $\delta-$field $K$ splits the $\delta-F-$algebra $A.$
\end{theorem}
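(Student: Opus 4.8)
The plan is to combine the structural description of torsion elements of the $\delta-F-$Brauer monoid obtained in Proposition~\ref{torsionelements} with the classical fact that a central simple algebra over the constant field $F^\delta$ is split by a finite field extension of $F^\delta$; the only extra work is to arrange that the resulting splitting is $\delta-$compatible.

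\emph{Reduction to a base change from the constants.} Arguing exactly as in the proof of Proposition~\ref{torsionelements}: since $A$ represents a torsion element, $(A^{\otimes m},\delta_A^{\otimes m})$ is a split $\delta-F-$algebra for some $m\ge 1$, hence a trivial $\delta-F-$module by Corollary~\ref{splitimpliestrivial}; as the diagonal embedding $A\hookrightarrow A^{\otimes m}$ is a monomorphism of $\delta-F-$algebras, $A$ itself is a trivial $\delta-F-$module. Thus $\mu_A\colon A^\delta\otimes_{F^\delta}F\to A$ is an isomorphism of $\delta-F-$algebras and, by Corollary~\ref{trivial-split}, $B:=A^\delta$ is a central simple $F^\delta-$algebra; comparing dimensions gives $\dim_{F^\delta}B=\dim_F A=n^2$ with $n=\sqrt{\dim_F A}$. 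Since $B$ consists of constants it carries the zero derivation $\theta$, so $(A,\delta_A)\cong(B\otimes_{F^\delta}F,\ \theta\otimes\delta_F)$ as $\delta-F-$algebras.

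\emph{Construction of the finite extension $K$.} By the classical theory of central simple algebras, $B$ admits a finite splitting field: there is a finite (necessarily separable, as $\mathrm{char}\,F=0$) extension $E=F^\delta(\beta)$ of $F^\delta$ with $B\otimes_{F^\delta}E\cong\mathrm{M}_n(E)$ as $E-$algebras; for instance one may take $E$ to be a maximal subfield of the division algebra underlying $B$, whence $[E:F^\delta]\le n$. Let $p\in F^\delta[t]$ be the (separable) minimal polynomial of $\beta$ and factor $p=p_1\cdots p_r$ into distinct monic irreducibles over $F$; then $F\otimes_{F^\delta}E\cong F[t]/(p)\cong\prod_i F[t]/(p_i)$ is a finite product of finite field extensions of $F$. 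Put $K:=F[t]/(p_i)$ for one index $i$, so $K$ is a finite algebraic extension of $F$; the composite $E\to F\otimes_{F^\delta}E\to K$ is a field embedding, and $\delta_F$ extends uniquely to a derivation $\delta_K$ on $K$ since $K/F$ is finite separable. I would then check that the image of $E$ in $K$ lies in $K^\delta$: the image $\bar\beta$ of $\beta$ is a simple root of $p$ (as $p$ is squarefree over $F$), so $p'(\bar\beta)\ne 0$; applying $\delta_K$ to $p(\bar\beta)=0$ and using that the coefficients of $p$ lie in $F^\delta$ and are therefore killed by $\delta_K$ yields $p'(\bar\beta)\,\delta_K(\bar\beta)=0$, hence $\delta_K(\bar\beta)=0$ and $E\subseteq K^\delta$.

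\emph{Conclusion.} Since $E\subseteq K^\delta$, one obtains a chain of isomorphisms of $\delta-K-$algebras
\begin{align*}
(A\otimes_F K,\ \delta_A\otimes\delta_K)
&\cong (B\otimes_{F^\delta}K,\ \theta\otimes\delta_K)
\cong\bigl((B\otimes_{F^\delta}E)\otimes_E K,\ \theta\otimes\delta_K\bigr)\\
&\cong(\mathrm{M}_n(E)\otimes_E K,\ \theta\otimes\delta_K)
=(\mathrm{M}_n(K),\ \delta^c),
\end{align*}
where the middle step is legitimate at the level of $\delta-$algebras precisely because $\delta_K$ vanishes on the image of $E$, and the last identification records that on $\mathrm{M}_n(E)\otimes_E K=\mathrm{M}_n(K)$ the derivation that is zero on the $\mathrm{M}_n(E)-$factor and equals $\delta_K$ on the $K-$factor is exactly the coordinate-wise derivation $\delta^c$. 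Hence the finite algebraic extension $K$ splits the $\delta-F-$algebra $A$. With Proposition~\ref{torsionelements} in hand the argument is essentially bookkeeping; the one point needing care is the passage from the \'etale algebra $F\otimes_{F^\delta}E$ to a single field factor $K$, together with the verification that the embedded copy of the splitting field $E$ consists of $\delta_K-$constants, which is exactly what lets the purely $E-$algebraic isomorphism $B\otimes_{F^\delta}E\cong\mathrm{M}_n(E)$ upgrade to a $\delta-K-$algebra isomorphism after base change.
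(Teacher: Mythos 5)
Your proof is correct, and after the common first step it takes a genuinely different route from the paper's. Both arguments begin identically: Proposition \ref{torsionelements} reduces everything to the fact that $A$ is a trivial $\delta-F-$module, so that $\mu_A$ identifies $(A,\delta_A)$ with $(A^\delta\otimes_{F^\delta}F,\ \theta\otimes\delta_F)$ and $B:=A^\delta$ is a central simple $F^\delta-$algebra. From there the paper passes to the compositum $F_1=F\cdot\overline{F^\delta}$ inside $\overline F$, whose constant field is algebraically closed, invokes Corollary \ref{trivial-split} to get a $\delta-F_1-$isomorphism $\phi:A\otimes F_1\to\mathrm{M}_n(F_1)$, and then cuts down to the subfield $K$ of $F_1$ generated over $F$ by the finitely many entries of the $\phi(v_i\otimes1)$; finiteness of $K/F$ follows because $F_1/F$ is algebraic. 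You instead split $B$ over a finite extension $E$ of $F^\delta$ by the classical theory, pass to a field factor $K$ of the \'etale $F-$algebra $F\otimes_{F^\delta}E$, and verify directly that the embedded copy of $E$ lands in $K^\delta$ (the computation $p'(\bar\beta)\,\delta_K(\bar\beta)=0$ with $p'(\bar\beta)\neq0$), which is exactly what makes the base-changed isomorphism $\delta-$compatible. Your version buys an explicit degree bound $[K:F]\le[E:F^\delta]\le\mathrm{ind}(A^\delta)\le n$ and never leaves the world of finite extensions; the paper's version is shorter because Corollary \ref{trivial-split} over $F_1$ delivers the splitting and its $\delta-$compatibility in one stroke, at the cost of a detour through an infinite algebraic extension. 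One cosmetic point: the map $A\hookrightarrow A^{\otimes m}$ you use is the embedding $a\mapsto a\otimes1\otimes\cdots\otimes1$ into the first tensor factor (as in the paper), not a ``diagonal'' embedding, which would not even be linear.
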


\begin{proof} 
	By Proposition \ref{torsionelements}, we know that $A$ is a trivial $\delta-F-$module. Let $\overline{F}$ be an algebraic closure of $F.$ It is well-known that the derivation on $F$ extends uniquely to $\overline{F}$ and that 
	$\overline{F}^\delta=\overline{F^{\delta}}.$ Thus,  $\overline{F}^\delta$ is an algebraically closed field. Let $F_1$ be the compositum of $F$ and $\overline{F}^\delta.$ Since $A$ is trivial, the $\delta-F_1-$algebra  $(A\otimes F_1, \delta_A\otimes\delta_{F_1})$ is also trivial and therefore by Corollary  \ref{trivial-split}, it is a split $\delta-F_1-$algebra. Let $\phi:(A\otimes F_1,\delta_A\otimes \delta_{F_1})\rightarrow (\mathrm{M}_n(F_1), \delta^c)$ be a $\delta-F_1-$isomorphism and $\{v_1,v_2,\dots,v_{n^2}\}$ be an  $F-$basis of $A.$ Consider the field $K$ generated by $F$ and the entries of the matrices $\phi(v_i\otimes 1).$ Since $K\subseteq F_1,$ $K$ is a finite algebraic extension of $F$  and it is also a $\delta-$subfield of $F_1.$ As $K$ contains $\phi(v_i\otimes 1)$ for each $i=1,\dots, n^2,$  $K$ splits the $\delta-F-$algebra $A.$ 
\end{proof}

\section{Projective Representations of Differential Galois Groups} \label{projrep}

  Let $A$ be a $\delta-F-$central simple algebra, $F^{\delta}$ be an algebraically closed field, $K$ be a Picard-Vessiot extension for some $\delta-F-$module $M$ such that $K$ splits the $\delta-F-$algebra $A$ and
$\mathscr{G}:=\mathscr G(K|F)$ be the $\delta-$Galois group of the $\delta-F-$module $M.$  Let $\psi_A: (\mathrm{M}_n(K), \delta^c)\to (A\otimes K, \delta_A\otimes \delta_K)$ be a $\delta-K-$algebra isomorphism. Then,  $\mathscr G-$action on $A\otimes K$  can be transported, through the isomorphism $\psi_A,$ to a $\mathscr{G}-$action on $\mathrm{M}_n(K):$    \begin{equation}\label{actionofGonM_n(K)}  \sigma \cdot (a_{ij})=(\psi^{-1}_A\circ(1\otimes \sigma)\circ \psi_A) (a_{ij}), \quad (a_{ij})\in \mathrm{M}_n(K).\end{equation} 
	We shall denote this $\mathscr G-$algebra $\mathrm{M}_n(K)$ by $(\mathrm{M}_n(K), \mathscr G_{\psi_A}).$ In the next few paragraphs, we shall describe this action in detail and in the process, we shall also obtain a projective representation of the $\delta-$Galois group.
	
	Let $\mathscr S(A):=(A\otimes K)^{\delta_A\otimes \delta_K}.$  We recall that the $\mathscr G-$action on $A\otimes K$ stabilizes $\mathscr S(A)$ and that $\psi_A$ also restricts to an isomorphism between the $\mathscr G-$algebras $\left(\mathrm{M}_n(F^{\delta}), \mathscr G_{\psi_A}\right)$ and $\left(\mathscr S(A), 1\otimes \mathscr{G}\right)$ and that  the $\mathscr G-$action on $\mathscr S(A)$ gives a representation of the linear algebraic group $\mathscr G:$ $$\rho_A: \mathscr{G}\rightarrow\mathrm{GL}(\mathscr S(A)), \quad \rho_A(\sigma)(x)=(1\otimes \sigma)(x).$$  Note that $\rho_A$ need not be faithful as $K$ is not assumed to be the Picard-Vessiot extension for the $\delta-F-$module $A.$ Since for each $\sigma\in \mathscr{G},$ $\rho_A(\sigma)$ is an $F^{\delta}-$algebra automorphism of $\mathscr S(A)$ and  since the subgroup $\mathrm{Aut(\mathscr S(A))}$ of all  $F^{\delta}-$algebra automorphisms is Zariski closed in $\mathrm{GL}(\mathscr S(A)),$ we have  the following (commutative diagram of) morphisms of algebraic groups:
	\begin{equation*}\label{algebraicgrouprep}
		\begin{tikzcd}
			\mathscr{G}\arrow[dr, rightarrow, "\rho_A"] \arrow[d, rightarrow]& \\
			\mathrm{Aut}(\mathscr S(A))\arrow[r, hookrightarrow,"inc"]&\mathrm{GL}(\mathscr S(A))
		\end{tikzcd}
	\end{equation*}
	
Since $\left(\mathrm{M}_n(F^{\delta}), \mathscr G_{\psi_A}\right)\overset{\psi_A}{\cong} \left(\mathscr S(A), 1\otimes \mathscr G\right),$  we therefore have  \begin{equation}\label{algebraicgrouprepmnc}
		\begin{tikzcd}
			\mathscr{G}\arrow[dr, rightarrow, "\tilde{\rho}_A"] \arrow[d, rightarrow]& \\
			\mathrm{Aut}(\mathrm{M}_n(F^{\delta}))\arrow[r, hookrightarrow,"inc"]&\mathrm{GL}(\mathrm{M}_n(F^{\delta}))
		\end{tikzcd}
	\end{equation}
	where $\tilde{\rho}_A(\sigma)(a_{ij})=\psi^{-1}_A(1\otimes \sigma) \psi_A(a_{ij}),$ for $(a_{ij})\in \mathrm{M}_n(F^\delta).$
  Since the automorphisms of $\mathrm{M}_n(F^{\delta})$ are the inner automorphisms, $\mathrm{Aut}(\mathrm{M}_n(F^{\delta}))\cong \mathrm{PGL}_n(F^{\delta})$ as algebraic groups. Thus we obtain a morphism of algebraic groups (projective representation) \begin{equation}\label{projectiverepresentation}\varphi_A: \mathscr G\rightarrow \mathrm{PGL}_n(F^{\delta})\end{equation} by composing $\tilde{\rho}_A$ with the isomorphism  $\mathrm{Aut}(\mathrm{M}_n(F^{\delta}))\cong \mathrm{PGL}_n(F^{\delta}).$ If $\pi: \mathrm{GL}_n(F^{\delta})\to \mathrm{PGL}_n(F^{\delta})$ is the canonical projection then  for every $\sigma\in \mathscr G$ and $(a_{ij})\in \mathrm{M}_n(F^{\delta}),$ 
	\begin{equation}\label{rhoasconjugation}\varphi_A(\sigma)(a_{ij}):=\tilde{\rho}_A(\sigma)(a_{ij})=(c_{ij\sigma})(a_{ij}) (c_{ij\sigma})^{-1},\end{equation} where  $(c_{ij\sigma})\in \mathrm{GL}_n(F^{\delta})$ is such that $\pi(c_{ij\sigma})=\varphi_A(\sigma).$   We note here that the projective representation $\varphi_A$ that we have obtained is the same as the one constructed in \cite[Section 3]{JM08} for matrix $\delta-F-$algebras.

\begin{proof}[Proof of Theorem \ref{proj-equivalence}]
	Let $\delta-\mathrm{CSA}_{K|F,\ n}$ denote the collection of all  $\delta-F-$algebra isomorphism classes of central simple algebras of dimension $n^2$ that are split by the Picard-Vessiot extension $K$ of $F.$ Let $\mathrm{PRep}_{\mathscr G,\ n}$ be the collection of all inequivalent projective representations of $\mathscr G:=\mathscr G(K|F)$ to $\mathrm{PGL}_n(F^\delta).$ Let $$\Gamma: \delta-\mathrm{CSA}_{K|F,\ n}\to \mathrm{PRep}_{\mathscr G,\ n}; \quad \Gamma([A])= [\varphi_A],$$ where $A$ is a representative of  an arbitrary equivalence class in $\delta-\mathrm{CSA}_{K|F,\ n}$ and $\varphi_A$ is the representation defined in  Equation (\ref{projectiverepresentation}). 
	
	We shall first show that $\Gamma$ is well-defined.  Let $\alpha: A\to B$ be an isomorphism  of $\delta-F-$central simple algebras. Then $\alpha\otimes 1: (A\otimes K, 1\otimes \mathscr G)\to (B\otimes K, 1\otimes \mathscr G)$ is a $\mathscr G-$equivariant isomorphism and also a $\delta-K-$algebra isomorphism between $(A\otimes K, \delta_A\otimes \delta_K)$ and $(B\otimes K, \delta_B\otimes \delta_K).$  Let $\beta: M_n(K)\to M_n(K)$ be the isomorphism obtained by composing the following isomorphisms: $$M_n(K)\overset{\psi_A}{\to} A\otimes K\overset{\alpha\otimes 1}{\to} B\otimes K\overset{\psi^{-1}_B}{\to}M_n(K).$$ Observe that $\beta$ is a $\mathscr G-$ equivariant isomorphism between the $\mathscr G-$algebras $(M_n(K),\mathscr{G}_{\psi_A})$ and $(M_n(K),\mathscr{G}_{\psi_B})$ and that $\beta$ is also a $\delta-K-$algebra automorphism of $(M_n(K), \delta^c).$ This implies $\beta$ restricts to a $\mathscr{G}-$algebra isomorphism between $(M_n(F^{\delta}),\mathscr{G}_{\psi_A})$ and $(M_n(F^{\delta}),\mathscr{G}_{\psi_B}).$ Thus $$\beta \tilde{\rho}_A(\sigma) \beta^{-1}=\tilde{\rho}_B(\sigma),\quad \text{for all} \ \sigma\in \mathscr G.$$ Now from Equation (\ref{rhoasconjugation}), we obtain that $\varphi_A$ and $\varphi_B$ are equivalent projective representations.

	 $\Gamma$ is injective: Assume on the contrary that there are $\delta-F-$algebras $A$ and $B$ in $\delta-\mathrm{CSA}_{K|F,\ n}$ representing two distinct classes in $\delta-\mathrm{CSA}_{K|F,\ n}$ and that their corresponding projective representations  $\varphi_A$ and $\varphi_B$ are equivalent. Then,  there is a matrix $H\in \mathrm{GL}_n(F^{\delta})$ such that $\varphi_A(\sigma)=\pi(H)\varphi_B(\sigma)\pi(H)^{-1}$ for all $\sigma \in G.$ From the natural conjugation action of $\mathrm{PGL}_n(F^{\delta})$ on $\mathrm{M}_n(F^{\delta}),$ the  group  $\mathscr G$ acts on $\mathrm{M}_n(F^{\delta})$ as automorphisms of $F^{\delta}-$algebras through the representations $\varphi_A$ and $\varphi_B$ as follows: For $X\in \mathrm{M}_n(F^{\delta}),$ \begin{align*}\varphi_A(\sigma) (X)&=Z_{\varphi_A(\sigma)} X Z^{-1}_{\varphi_A(\sigma)}, \quad Z_{\varphi_A(\sigma)}\in  \pi^{-1}(\varphi_A(\sigma))\\  \varphi_B(\sigma) (X)&=Z_{\varphi_B(\sigma)} X Z^{-1}_{\varphi_B(\sigma)},\quad Z_{\varphi_B(\sigma)} \in \pi^{-1}(\varphi_B(\sigma)).\end{align*}
	
	Now, from Equation (\ref{rhoasconjugation}), it is easily seen that  $$\left(\mathrm{M}_n(F^{\delta}), \mathscr G_{\psi_A}\right)\to \left(\mathrm{M}_n(F^{\delta}), \mathscr G_{\psi_B}\right);\quad X\to H^{-1}XH$$ is an isomorphism of $\mathscr G-$algebras.  Since $$\mathscr S(A)\overset{\psi^{-1}_A}{\cong} \mathrm{M}_n(F^{\delta})\overset{\psi_B}{\cong} \mathscr S(B),$$ we have a $\mathscr G-$algebra isomorphism $\phi:(\mathscr S(A), 1\otimes \mathscr G)\to(\mathscr S(B), 1\otimes \mathscr G).$  Then $$\phi\otimes 1: \left(\mathscr S(A)\otimes_{F^{\delta}} K, 1\otimes \mathscr G\otimes_{F^{\delta}} \mathscr G\right)\to \left(\mathscr S(B)\otimes_{F^{\delta}} K,1\otimes \mathscr G\otimes_{F^{\delta}} \mathscr G\right)$$ is an isomorphism of $\mathscr G-$algebras as well as of $\delta-K-$algebras with derivation $\theta\otimes \delta_K.$  Using the $\mathscr G-$equivariant $\delta-K-$isomorphisms $\mu_A$ and $\mu_B,$ we  see that $A\otimes K$ is isomorphic to $B\otimes K$ as both $K-$algebras as well as  $\mathscr G-$algebras.  Finally, taking $\mathscr G-$invariance and using the fact that $K^{\mathscr G}=F$, we obtain that the $\delta-F-$algebras $A$ and $B$ are isomorphic, which contradicts our assumption. Thus $\Gamma$ must be injective.

	 $\Gamma$ is surjective:  Let $\varphi: \mathscr G\rightarrow \mathrm{Aut}(\mathrm{M}_n(F^{\delta}))\cong \mathrm{PGL}_n(F^{\delta})$ be a projective representation. Since $\mathrm{Aut}(\mathrm{M}_n(F^{\delta}))$ is a closed algebraic subgroup of $\mathrm{GL}(\mathrm{M}_n(F^{\delta})),$ we have $$\mathscr G\overset{\varphi}{\rightarrow} \mathrm{Aut}(\mathrm{M}_n(F^{\delta}))\overset{\mathrm{inc}}{\hookrightarrow} \mathrm{GL}(\mathrm{M}_n(F^{\delta})).$$
	 Let $\{\{M\}\}$ be the Tannakian category of a $\delta-F-$module $M$ whose Picard-Vessiot extension is $K.$ Since $\{\{M\}\}$ is equivalent to $\mathrm{Rep}_{\mathscr G},$   there is a  $\delta-F-$module $N\in \{\{M\}\}$ of dimension $n^2$ over $F$  and  an $F^{\delta}-$vector space isomorphism  $\psi: \mathrm{M}_n(F^{\delta})\to \mathscr S(N)$ such that the representation $\mathrm{inc}\circ\varphi$ is equivalent to  $\rho_{N}: \mathscr G\to \mathrm{GL}(\mathscr S(N)).$  Note that the $\mathscr{G}$ action on $\mathscr S(N)$ given by $\rho_{N}$ is the restriction of the $\mathscr G$ action on $N\otimes K;$   $x\otimes k\mapsto x\otimes \sigma(k)$ to $\mathscr S(N).$
	 
	 Using the isomorphism $\psi,$ we consider  $\mathscr S(N)$ as a central simple (matrix) $F^{\delta}-$algebra. Then $(\mathscr S(N)\otimes_{F^{\delta}} K, \theta\otimes_{F^{\delta}} \delta_K)$ is a $\delta-K-$central simple algebra. Moreover, now,  $\mathscr G-$acts on $\mathscr S(N)$  by algebra automorphisms. Then, the  $\mathscr G-$equivariant $\delta-K-$module isomorphism $$\mu_N: \mathscr S(N)\otimes_{F^{\delta}} K\to N\otimes K$$ can be made into an isomorphism of $\delta-K-$algebras by transporting the algebra structure of $\mathscr S(N)\otimes_{F^{\delta}} K$ on to $N\otimes K$, where the $\mathscr{G}$ actions are as described in Equations \ref{actiononsinglecoordinate} and \ref{actiononbothcoordinates}.  By taking $\mathscr G-$invariance, we obtain  $$N\cong (N\otimes K)^{\mathscr G}\cong \left(\mathscr S(N)\otimes_{F^{\delta}} K\right)^{\mathscr G}.$$ 
	 
	 Since $N\otimes K$ is a central simple $K-$algebra and   $$N\otimes K\cong N\otimes F\otimes K\cong (N\otimes K)^\mathscr G\otimes K,$$ we obtain that $(N\otimes K)^\mathscr G$  has a structure of a central simple $F-$algebra (see the Proof of Corollary \ref{trivial-split}).  Thus,  $N$ can be viewed as a $\delta-F-$central simple algebra. Since $(N, \delta_N)$ is also split by the $\delta-$field $K,$ it represents a class in $\delta-\mathrm{CSA}_{K|F,\ n}.$ By definition, $\Gamma ([N])=[\rho_N].$  Since $\rho_N$ is equivalent to $\varphi,$ we obtain that $\Gamma$ is surjective.\end{proof}

	\begin{remarks*}
		
	\item \begin{enumerate}[(i)]
		\item Let $K$ and $\mathscr G$ be as in Theorem \ref{proj-equivalence},  $\mathscr G$ act on $\mathrm{GL}_n(K)$ coordinate-wise and consider the pointed set $\mathrm H^1(\mathscr G, \mathrm{GL}_n(K))$ of cohomologous $1-$cocycles. The inclusion map $i: \mathrm{GL}_n(F^\delta)\to \mathrm{GL}_n(K)$ and the projection map $\pi: \mathrm{GL}_n(K)\to \mathrm{PGL}_n(K)$ induces natural maps of pointed sets $i^*: \mathrm{H}^1(\mathscr G, \mathrm{GL}_n(F^\delta))\to \mathrm{H}^1(\mathscr G, \mathrm{GL}_n(K))$ and $\pi^*: \mathrm{H}^1(\mathscr G, \mathrm{GL}_n(K))\to \mathrm{H}^1(\mathscr G, \mathrm{PGL}_n(K)).$  Since $\mathscr G$ fixes elements (of $F$ and therefore) of $F^\delta,$ we observe that $\mathrm{H}^1(\mathscr G, \mathrm{GL}_n(F^\delta))$ consists of  equivalence classes of group homomorphisms from $\mathscr G$ to $\mathrm{GL}_n(F^\delta)$ and $\mathrm{PRep}_{\mathscr G, \ n}$  can be identified with the subclasses of algebraic group homomorphisms from $\mathscr G$ to $\mathrm{PGL}_n(F^\delta).$ We also have the following commutative diagram \[\begin{tikzcd}\mathrm{H}^1(\mathscr G,\mathrm{GL}_n(F^\delta)) \arrow{r}{\pi^*} \arrow[swap]{d}{i^*} & \mathrm{H}^1(\mathscr G,\mathrm{PGL}_n(F^\delta)) \arrow{d}{i^*} \\	\mathrm{H}^1(\mathscr G,\mathrm{GL}_n(K)) \arrow{r}{\pi^*} & \mathrm{H}^1(\mathscr G,\mathrm{PGL}_n(K))	\end{tikzcd}	\]
		In the context of matrix $\delta-$algebras, the projective representation of $\mathscr G$ considered in \cite{JM08} and the one we are considering here is one and the same,  \cite[Theorem 2]{JM08} becomes applicable. Thus we obtain that  for $\varphi\in \mathrm{PRep}_{\mathscr G, \ n}\subset \mathrm{H}^1(\mathscr G,\mathrm{PGL}_n(F^\delta)),$ $\Gamma^{-1}(\varphi)$ is a class represented by a matrix $\delta-F-$algebra if and only if $i^*(\varphi)$  is the trivial class in $\mathrm{H}^1(\mathscr G,\mathrm{PGL}_n(K));$ that is,    there is an element $H\in \mathrm{GL}_n(K)$ such that for all $\sigma\in\mathscr G,$ $\varphi(\sigma)=\pi(H^{-1}\sigma(H)),$ where $\pi:  \mathrm{GL}_n(K)\to \mathrm{PGL}_n(K)$ is the canonical projection. \\
		\item  Let $K$ and $\mathscr G$ be as in Theorem \ref{proj-equivalence}. If $\varphi\in \mathrm{PRep}_{\mathscr G, n}$ lifts to a linear representation of algebraic groups $\tilde{\varphi}: \mathscr G\to \mathrm{GL}_n(F^\delta),$ then since  there is an $N\in \{\{M\}\}$  such that $\tilde{\varphi}$ is equivalent to the representation $\rho_N: \mathscr G\to \mathrm{GL}(\mathscr S(N)),$ we obtain that $\varphi$ and $\pi\circ \rho_N$ are equivalent projective representations. Now, from Equation (\ref{repcocycle}), we have  $H\in \mathrm{GL}_n(K)$ such that $\rho_N(\sigma)=H^{-1}\sigma(H).$ This implies $(\pi\circ \rho_N)(\sigma)=\pi(H^{-1}\sigma(H)),$ for all $\sigma\in \mathscr G.$ Thus if a projective representation $\varphi$  lifts to a linear representation then any representative of $\Gamma^{-1}(\varphi)$ is a matrix $\delta-F-$algebra. \\
		

		\item Let $A$ be a $\delta-F-$central simple algebra.  Define $$\mathrm{deg}^{\delta}_{\mathrm{sp}}(A):= \mathrm{min}\{\mathrm{tr.deg}(L|F)\ | \ (L, \delta_L)\supseteq (F, \delta_F) \ \text{splits}\ (A, \delta_A)\ \text{and}\ L^{\delta}=F^{\delta}\},$$ where $\mathrm{tr.deg}(L|F)$ denote the field transcendence degree of $L$ over $F.$   It is shown in \cite[Corollary 5.2]{tsui-wang} that $\mathrm{deg}^{\delta}_{\mathrm{sp}}(A)\leq n^2-1,$ where $[A: F]=n^2.$ From Theorem \ref{Mainresult} (\ref{existence-minimal}), we obtain that $$\mathrm{deg}^{\delta}_{\mathrm{sp}}(A)=\mathrm{tr.deg}(K|F),$$ where $K$ is the Picard-Vessiot extension for the $\delta-F-$module $A.$  Furthermore, if we  consider the faithful representation described in the  diagram (\ref{algebraicgrouprepmnc}) and in the Equation (\ref{projectiverepresentation})
		 $$\mathscr{G}(K|F)\hookrightarrow \mathrm{PGL}_n(F^\delta)\hookrightarrow \mathrm{GL}_{n^2}(F^{\delta})$$ then $\mathrm{dim}(\mathscr G(K|F))\leq n^2-1$ and  by the fundamental theorem of differential Galois theory (\cite[Corollary 1.30]{MvdP03}),  $$\mathrm{deg}^{\delta}_{\mathrm{sp}}(A)=\mathrm{tr.deg}(K|F)=\mathrm{dim}(\mathscr{G}(K|F))\leq n^2-1.$$ 
		 \end{enumerate}
		\end{remarks*}

The next proposition describes a relationship  between $\mathscr{G}-$stable ideals of $(\mathscr S(A), 1\otimes \mathscr{G})$ and $\delta-$ideals of a $\delta-F-$algebra $A$ that will play an important role in the proofs of Theorem \ref{Mainresult} (\ref{reductivecase}) and (\ref{liouvilliancase}).

\begin{proposition} ($\mathrm{cf.}$ \cite[Corollary 2.35]{MvdP03}) \label{bijectivecorrespondence-ideals} Let $A$ be a $\delta-F-$central simple algebra, $F^{\delta}$ be an algebraically closed field, $K$ be a Picard-Vessiot extension for the $\delta-F-$module $A$ and $\mathscr G$ be its $\delta-$Galois group.  Then, the fibre functor $\mathscr S$ 
	$$\mathscr S(N)=(N\otimes K)^{\delta_N\otimes \delta_K}$$
	that establishes an equivalence between the categories $\{\{A\}\}$ and $\mathrm{Rep}_\mathscr G$ also establishes an inclusion preserving bijective correspondence between the $\delta-$ideals of $(A, \delta_A)$ and the $\mathscr G-$stable ideals of $(\mathscr S(A), 1\otimes\mathscr G).$   \end{proposition}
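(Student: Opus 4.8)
The plan is to exploit the Tannakian equivalence between $\{\{A\}\}$ and $\mathrm{Rep}_{\mathscr G}$ together with the fact that $\mathscr S(A)\cong \mathrm{M}_n(F^\delta)$ is an $F^\delta$-algebra on which $\mathscr G$ acts by algebra automorphisms, and to carry the ideal structure back and forth along $\mathscr S$ and its quasi-inverse $W\mapsto (W\otimes_{F^\delta}K)^{\mathscr G}$. First I would observe that a $\delta$-ideal $\mathfrak a$ of $A$ is in particular a $\delta$-$F$-submodule of $A$, hence an object of $\{\{A\}\}$ with an inclusion $\iota:\mathfrak a\hookrightarrow A$, and that applying the (exact, faithful, tensor) functor $\mathscr S$ gives an $F^\delta$-subspace $\mathscr S(\mathfrak a)=(\mathfrak a\otimes K)^{\delta\otimes\delta_K}\subseteq \mathscr S(A)$ which is $\mathscr G$-stable because $\mathscr S(\iota)$ is $\mathscr G$-equivariant (the $1\otimes\mathscr G$-action on $A\otimes K$ restricts to $\mathfrak a\otimes K$). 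The key point, and the step I expect to be the main obstacle, is verifying that $\mathscr S(\mathfrak a)$ is actually a two-sided \emph{ideal} of the algebra $\mathscr S(A)$: this needs the multiplicativity of $\mu_A$ noted in the excerpt, namely that $\mu_A:\mathscr S(A)\otimes_{F^\delta}K\to A\otimes K$ is a $\delta$-$K$-algebra isomorphism, so that $\mathscr S(\mathfrak a)$ can be identified, after extending scalars to $K$, with $\mathfrak a\otimes K$ sitting inside $A\otimes K$; since $\mathfrak a\otimes K$ is an ideal of the $K$-algebra $A\otimes K$ and everything is flat over $F^\delta$, descending back to $F^\delta$-points shows $\mathscr S(\mathfrak a)\cdot\mathscr S(A)\subseteq\mathscr S(\mathfrak a)$ and symmetrically on the other side.

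Conversely, given a $\mathscr G$-stable ideal $I$ of $(\mathscr S(A),1\otimes\mathscr G)$, I would use the Tannakian equivalence to produce a subobject: $I$ is in particular a $\mathscr G$-submodule of $\mathscr S(A)$, so by the equivalence there is $N\in\{\{A\}\}$ with a monomorphism $j:N\hookrightarrow A$ in $\{\{A\}\}$ and $\mathscr S(j):\mathscr S(N)=I\hookrightarrow\mathscr S(A)$; replacing $N$ by $j(N)$ we may take $N=\mathfrak a$ to be an honest $\delta$-$F$-submodule of $A$ with $\mathscr S(\mathfrak a)=I$. Then I must check $\mathfrak a$ is a $\delta$-ideal of $A$. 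Again this is a descent argument: $\mathfrak a = (\mathfrak a\otimes K)^{\mathscr G} \cong (I\otimes_{F^\delta}K)^{\mathscr G}$ as $\delta$-$F$-modules (using $K^{\mathscr G}=F$ exactly as in the recovery statement in Section~\ref{tannakianformalism}), and under $\mu_A$ the subspace $\mathfrak a\otimes K\subseteq A\otimes K$ corresponds to $I\otimes_{F^\delta}K\subseteq \mathscr S(A)\otimes_{F^\delta}K$, which is an ideal of the $K$-algebra $\mathscr S(A)\otimes_{F^\delta}K$ since $I$ is an ideal of $\mathscr S(A)$; hence $\mathfrak a\otimes K$ is an ideal of the $K$-algebra $A\otimes K$, and intersecting with $A=A\otimes 1$ (equivalently, taking $\mathscr G$-invariants) shows $\mathfrak a = (A\otimes K)\cap(\mathfrak a\otimes K)$ is a two-sided ideal of $A$; it is automatically a $\delta$-ideal since it is a $\delta$-$F$-submodule.

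It remains to see that the two assignments $\mathfrak a\mapsto\mathscr S(\mathfrak a)$ and $I\mapsto(I\otimes_{F^\delta}K)^{\mathscr G}$ are mutually inverse and inclusion-preserving. Inclusion-preservation is immediate from functoriality of $\mathscr S$ (a containment $\mathfrak a_1\subseteq\mathfrak a_2$ gives a monomorphism in $\{\{A\}\}$, hence a monomorphism $\mathscr S(\mathfrak a_1)\hookrightarrow\mathscr S(\mathfrak a_2)$ compatible with the embeddings into $\mathscr S(A)$), and similarly for the inverse direction. That the composites are the identity is precisely the statement that $\mathscr S$ and $W\mapsto(W\otimes_{F^\delta}K)^{\mathscr G}$ are quasi-inverse equivalences, already recorded in Section~\ref{tannakianformalism} (\cite[Remark 2.34]{MvdP03}); one only needs that the natural isomorphisms $N\xrightarrow{\sim}(\mathscr S(N)\otimes_{F^\delta}K)^{\mathscr G}$ and $\mathscr S((W\otimes_{F^\delta}K)^{\mathscr G})\xrightarrow{\sim}W$ respect the extra algebra/ideal structure, which follows from the multiplicativity of $\mu_A$ used above. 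Assembling these points gives the claimed inclusion-preserving bijection.
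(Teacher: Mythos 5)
Your proposal is correct and follows essentially the same route as the paper: the forward direction by functoriality of $\mathscr S$ together with the fact that $J\otimes K$ is an ideal of $A\otimes K$, and the converse by using the Tannakian equivalence to realize a $\mathscr G$-stable ideal $I$ as $\mathscr S(J)$ for a $\delta$-$F$-submodule $J\subseteq A$, then descending the ideal property through the $\mathscr G$-equivariant $\delta$-$K$-algebra isomorphism $\mu_A$ and $\mathscr G$-invariants. Your write-up is in fact slightly more careful than the paper's in spelling out inclusion-preservation and the mutual inverseness of the two assignments.
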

\begin{proof} 
	If $J$ is a $\delta-$ideal of $A$ then it is also a $\delta-F-$submodule of $A$ and therefore, $\mathscr S (J)\in \mathrm{Rep}(\mathscr G)$ is $\mathscr{G}-$stable. Also, $J\otimes K$ is a  $\delta-$ideal of $A\otimes K$ and thus $\mathscr S(J)$ is an ideal of $\mathscr S(A).$
	
	To prove the converse, we consider the $\mathscr G-$ equivariant $\delta-K-$algebra isomorphism $\mu_A:\mathscr S (A)\otimes_{F^{\delta}} K\to A\otimes K$ and  take $\mathscr{G}-$invariance to obtain  the following $\delta-F-$algebra isomorphisms \begin{equation}\label{inversefunctor}(\mathscr S (A)\otimes_{F^{\delta}} K)^\mathscr G\overset{\mu_A}{\cong}(A\otimes K)^\mathscr G \cong A.\end{equation}
	
	Now let $I$ be a $\mathscr G-$stable ideal of $\mathscr S(A)$ and $\tilde{\mu}_A:(\mathscr S (A)\otimes_{F^{\delta}} K)^\mathscr G\to A$ be the $\delta-F-$isomorphism obtained by taking the  composition of above isomorphisms. Then $I\in \mathrm{Rep}_\mathscr G$ and therefore, $I=\mathscr S(J)$ for some $\delta-F-$submodule $J$ of $(A, \delta_A).$ We only need to show that $J$ is an ideal of $A.$  Since $\mathscr S(J)$ is an ideal of $\mathscr S(A),$ we have $\mathscr S(J)\otimes_{F^{\delta}} K$ is a $\delta-$ideal of $\mathscr S (A)\otimes_{F^{\delta}} K.$ Then, one can simply replace $A$ by $J$ in Equation (\ref{inversefunctor}) and obtain $$\tilde{\mu}_A\left(\mathscr S(J)\otimes_{F^{\delta}} K\right)\cong J.$$ Thus $J$ is an ideal of $A.$ \end{proof}

\section{Complete reduciblity and solvability} \label{reducible-solvable}

In this section we shall prove Theorem \ref{Mainresult} (\ref{reductivecase}) and (\ref{liouvilliancase}). Let $(F^\delta)^n$ be the $n-$dimensional $F^\delta-$vector spaces of column vectors and  $C\in\mathrm{GL}_n(F^\delta).$ For $v\in (F^\delta)^n,$  $Cv$ denotes the usual matrix multiplication of $C$ and $v.$ For a subspace $W$ of $(F^\delta)^n,$ $CW$ denotes the image of $W$ under the matrix multiplication by $C.$ For any subset $I$ of $\mathrm{M}_n(F^\delta),$ $C\cdot I:= CIC^{-1}$ denotes the image of $I$ under the conjugation by $C.$

\begin{proposition}\label{Phiequivalence}
	There is a bijection $\Phi$ between the collection $\mathcal I$ of all right ideals of $\mathrm{M}_n(F^\delta)$ and the collection $\mathcal S$ of all $F^\delta-$subspaces  of $(F^\delta)^n$ having the following properties: For right ideals $I, J,$ 
	\begin{enumerate}[(i)]
		\item  if $I\subseteq J$ then  $\Phi(I)\subseteq \Phi(J).$
		\item $\Phi(I\cap J)=\Phi(I)\cap \Phi(J).$ 
		\item $\Phi(I+J)=\Phi(I)+\Phi(J)$ and if $I+J=I\oplus J$ then $\Phi(I\oplus J)=\Phi(I)\oplus \Phi(J).$ 
		\item $\Phi(C\cdot I)=C\Phi(I).$
	\end{enumerate}
\end{proposition}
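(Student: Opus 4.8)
The idea is to realise $\Phi$ explicitly through column spaces. For a right ideal $I\subseteq\mathrm M_n(F^\delta)$ set $\Phi(I):=\{Av\ :\ A\in I,\ v\in(F^\delta)^n\}$; writing $\mathrm{col}(A)$ for the column space of $A$ and using the matrix units $E_{kl}$ (the matrix with $1$ in position $(k,l)$ and $0$ elsewhere), one checks that $\Phi(I)$ coincides with the $F^\delta$-span of all the columns of all matrices in $I$, so in particular $\Phi(I)=\sum_{A\in I}\mathrm{col}(A)$ is an $F^\delta$-subspace of $(F^\delta)^n$. In the other direction, for a subspace $W\subseteq(F^\delta)^n$ put $\Psi(W):=\{A\in\mathrm M_n(F^\delta)\ :\ \mathrm{col}(A)\subseteq W\}$; since $\mathrm{col}(AB)\subseteq\mathrm{col}(A)$ for all $B$, the set $\Psi(W)$ is a right ideal, and it is closed under addition. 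The first task is to show $\Phi$ and $\Psi$ are mutually inverse.

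For $\Phi\circ\Psi=\mathrm{id}$: clearly $\Phi(\Psi(W))\subseteq W$, and equality holds because $\Psi(W)$ contains a matrix whose first $\dim_{F^\delta} W$ columns form a chosen basis of $W$ (and the rest are $0$), whose column space is exactly $W$. For $\Psi\circ\Phi=\mathrm{id}$ the inclusion $I\subseteq\Psi(\Phi(I))$ is immediate, and the reverse inclusion is the only point with real content. The key observation is that if $v\in\Phi(I)$ then, for every index $l$, the matrix $N_v^{(l)}$ having $v$ as its $l$-th column and zeros elsewhere already lies in $I$: writing $v=\sum_j c_j u_j$ with $c_j\in F^\delta$ and $u_j$ the $k_j$-th column of some $A_j\in I$, one has that the single-column matrix with $u_j$ in column $l$ equals $A_j E_{k_j\,l}\in I$, so $N_v^{(l)}$ is an $F^\delta$-combination of matrices of $I$, hence in $I$. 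Consequently any $A\in\Psi(\Phi(I))$, whose $l$-th column $a_l$ lies in $\Phi(I)$ for each $l$, is the sum $\sum_l N^{(l)}_{a_l}\in I$; thus $\Psi(\Phi(I))\subseteq I$ and $\Phi$ is a bijection with inverse $\Psi$.

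It then remains to verify the four compatibility properties, all short. Property (i) and the first equality in (iii) are immediate from $\Phi(I)=\sum_{A\in I}\mathrm{col}(A)$ together with $\mathrm{col}(A+B)\subseteq\mathrm{col}(A)+\mathrm{col}(B)$. For (ii), since $A\in\Psi(W_1)\cap\Psi(W_2)$ iff $\mathrm{col}(A)\subseteq W_1\cap W_2$, we get $\Psi(W_1)\cap\Psi(W_2)=\Psi(W_1\cap W_2)$; applying this to $W_i=\Phi(I),\Phi(J)$ and using that $\Phi,\Psi$ are mutually inverse on right ideals gives $\Phi(I\cap J)=\Phi(I)\cap\Phi(J)$. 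The direct-sum refinement of (iii) follows because $I\cap J=0$ forces $\Phi(I)\cap\Phi(J)=\Phi(0)=0$ by (ii), so $\Phi(I\oplus J)=\Phi(I)+\Phi(J)=\Phi(I)\oplus\Phi(J)$. Finally, for (iv), right multiplication by the invertible matrix $C^{-1}$ does not change the column space, so $\mathrm{col}(CAC^{-1})=\mathrm{col}(CA)=C\,\mathrm{col}(A)$, whence $\Phi(C\cdot I)=\sum_{A\in I}\mathrm{col}(CAC^{-1})=C\sum_{A\in I}\mathrm{col}(A)=C\,\Phi(I)$. The main obstacle is the bijectivity of $\Phi$, and within it the single-column-matrix argument identifying $\Psi(\Phi(I))$ with $I$; everything else is formal.
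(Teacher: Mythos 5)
Your proposal is correct and uses the same map as the paper: $\Phi(I)$ is the span of all columns of all matrices in $I$, and the paper simply asserts that bijectivity and properties (i)--(iv) "follow immediately from the definition." You have supplied the details the paper omits — in particular the explicit inverse $\Psi(W)=\{A:\mathrm{col}(A)\subseteq W\}$ and the matrix-unit argument showing $\Psi(\Phi(I))\subseteq I$ — so this is a fully written-out version of the same argument.
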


\begin{proof}
	Let $\mathtt{e}_{1}, \dots, \mathtt{e}_n$ be the standard basis of $(F^\delta)^n.$ Then, the mapping $\Phi: \mathcal I\to \mathcal S$ that sends an ideal $I$ to the $F^\delta-$subspace of all columns of all matrices belonging to $I,$ that is,  $$\Phi(I):=\{(a_{ij})\mathtt{e}_{s}\ | \ \text{for all}\  1\leq s\leq n, (a_{ij})\in I\},$$  can be easily seen to be  bijective and the proofs of all four parts of the proposition follows immediately from the definition of our $\Phi.$ 
\end{proof}

We are now ready to prove Theorem \ref{Mainresult}(\ref{reductivecase}) and (\ref{liouvilliancase}). In the proofs,   $\mathscr H$ denote the group $\pi^{-1}(\varphi_A(\mathscr G)),$ where $\varphi_A$ and $\pi$ are as described in Section \ref{projrep}.

\begin{proof}[Proof of Theorem \ref{Mainresult}(\ref{reductivecase})] First we shall make the following general observation. Since $K$ is the Picard-Vessiot extension of $F$ for the $\delta-F-$module $A,$ we have that $\rho_A$ is faithful and consequently that $\varphi_A$ is a faithful map of algebraic groups.  Since $F^\delta$ is of characteristic zero, $\varphi_A: \mathscr G\to \mathscr \varphi_A(\mathscr G)$ is an isomophism of  algebraic groups. This observation in turn implies that the  exact sequence $$1\to \mathrm{G}_m\to \mathrm{GL}_n(F^{\delta})\to \mathrm{PGL}_n(F^{\delta})\to 1,$$  when restricted to   the closed subgroup $\varphi_A(\mathscr{G})$ of $\mathrm{PGL}_n(F^{\delta}),$ yields 
	  the exact sequence of algebraic groups $$1\to \mathrm{G}_m\to \mathscr H \to \mathscr G\to 1.$$ 
	  
	  Suppose that $\mathscr G$ is reductive.  Since $\mathrm{G}_m$ and $\mathscr{G}$ are (linearly) reductive,  $\mathscr H$ is (linearly) reductive (\cite[Proposition 3.4]{Benedict-09}). The natural action of  $\mathscr H$ on $(F^{\delta})^n$ given by \begin{equation}\label{actiononideals-subspaces} \mathtt{e}_i\mapsto (c_{ij})\mathtt e_i\end{equation} makes $(F^{\delta})^n$ a completely reducible $\mathscr H-$module. Thus, we have \begin{equation}\label{decompvectorspace}(F^{\delta})^n=\bigoplus^l_{i=1} V_i,\end{equation} where each $V_i$ is an irreducible $\mathscr H-$module. 
	
Let  $\mathcal I,$ $\mathcal S$ and $\phi$ be as in Proposition $\ref{Phiequivalence}.$   Recall that in the $\mathscr G-$algebra $(\mathrm{M}_n(F^\delta), \mathscr G_{\psi_A}),$ for each $\sigma\in \mathscr G,$ there is a matrix $(c_{ij\sigma})\in \mathscr H$ such that $\sigma \cdot (a_{ij})=(c_{ij\sigma})(a_{ij})(c_{ij\sigma})^{-1}$ for all $(a_{ij})\in \mathrm{M}_n(F^\delta).$ For $i=1,2,\dots, l,$ let $\tilde{I}_i\in \mathcal I$ be such that $\Phi(\tilde{I}_i)=V_i.$  Now from Proposition $\ref{Phiequivalence},$ we obtain $$\Phi\left(\sigma\cdot\tilde{I}_i\right)=\Phi\left((c_{ij\sigma})\tilde{I}_i(c_{ij\sigma})^{-1}\right)=(c_{ij\sigma}) \Phi(\tilde{I}_i)=(c_{ij\sigma})V_i=V_i=\Phi(\tilde{I}_i).$$
Since $\Phi$ is bijective, we have $\sigma\cdot \tilde{I}_i=\tilde{I}_i$ for each $i=1,2,\dots, l.$ That is, $\tilde{I}_i$ is  $\mathscr G-$stable right ideal of the $\mathscr G-$algebra $(\mathrm{M}_n(F^\delta), \mathscr G_{\psi_A})$. Furthermore,  since each $V_i$ is an  irreducible $\mathscr H-$module, each $\tilde{I}_i$ must be minimal among all $\mathscr G-$stable right ideals of $\mathrm{M}_n(F^\delta).$  Therefore,  applying $\Phi^{-1}$ and then $\psi_A$ to Equation (\ref{decompvectorspace}), we obtain  \begin{equation}\label{tensordescent}\mathscr S(A):=(A\otimes K)^{\delta_A\otimes \delta_K}=\bigoplus^l_{i=1}I_i,\end{equation}   
	where each $I_i$ is minimal among all $\mathscr G-$stable right ideals of $(A\otimes K)^{\delta_A\otimes \delta_K}.$   Now, from Proposition \ref{bijectivecorrespondence-ideals},  we obtain that $A$ is  an internal direct sum of minimal $\delta-$right ideals of $A.$
	
	Converse is proved, by simply retracing the above arguments, as follows.  Suppose that $$A= \oplus^l_{i=1}\mathfrak a_i,$$ as $\delta-F-$modules, where each $\mathfrak{a}_i$ is a minimal $\delta-$right ideal of $A.$ We then apply $\mathscr S$ followed by $\psi^{-1}_A$ and obtain $$\mathrm{M}_n(F^{\delta})=  \psi^{-1}_A(\mathscr S(A))= \oplus^l_{i=1}I_i,$$ where 
	$I_i=\psi^{-1}_A(\mathscr S(\mathfrak a_i)).$ Since $\mathfrak{a}_i$ is a minimal $\delta-$right ideal of $A,$ we have $I_i$ is  minimal among all $\mathscr{G}-$stable right ideals of  $\mathrm{M}_n(F^{\delta}).$ Now, 
	applying $\Phi,$ we obtain that $$(F^{\delta})^n=\oplus^l_{i=1}V_i,$$  where each $\Phi(I_i)=V_i$ is a faithful irreducible $\mathscr H-$module under the action defined in Equation (\ref{actiononideals-subspaces}). This makes $\mathscr H$ a linearly reductive linear algebraic group. Since   $\mathscr G$ is a quotient of $\mathscr H$ by the normal subgroup of scalar matrices of $\mathrm{GL}_n(F^{\delta})$ and the characteristic of $F^{\delta}$ is zero,  we obtain that $\mathscr G$ is reductive. \end{proof}
	
	\begin{proof}[Proof of Theorem \ref{Mainresult}(\ref{liouvilliancase})]   Let $\mathscr G^0$ be a solvable algebraic group and $F^0$ be the algebraic closure of $F$ in $K.$ Then, $F^0=E^{\mathscr G^0}$ (\cite[Proposition 1.34]{MvdP03}).   First assume that $\mathscr{G}^0=\mathscr{G},$ that is, $\mathscr G$ is connected so that $F^0=F.$   From the exact sequence of algebraic groups $$0\to \mathrm{G}_m\to \mathscr H\to \mathscr G\to 0,$$ we observe that $\mathscr H$ is a connected solvable group if and only if $\mathscr G$ is. Thus, we have a chain of $\mathscr H-$stable subspaces of $(F^{\delta})^ n$ $$V_1\subsetneq V_2\subsetneq\cdots\subsetneq V_n=(F^{\delta})^n$$  
such that $\mathrm{dim}_{F^\delta}(V_j)=j$ for $j=1,2,\dots, n.$  
	
	 We apply $\Phi^{-1}$ and then $\psi_A$  to obtain a chain of $\mathscr{G}-$stable right ideals $I_i$ of $(\mathscr S(A), 1\otimes \mathscr G)$ $$I_1\subsetneq I_2\subsetneq\cdots\subsetneq \mathscr S(A).$$
	Since $\mathrm{dim}_{F^\delta}(V_j)=j,$ we must have  $\mathrm{dim}_{F^\delta}(I_j)=jn.$ From Proposition \ref{bijectivecorrespondence-ideals}, we now obtain a chain of  $\delta-$ideals $J_j$ of $A$ $$J_1\subsetneq J_2\subsetneq\cdots\subsetneq A$$
	with  $\mathrm{dim}_F(J_j)=jn.$  In particular, $J_1$ is a right ideal of $A$ with $\mathrm{dim}_F(J_1)=n$  and therefore, $A$ is a split $F-$algebra.
	
	Now, suppose that $\mathscr G\neq \mathscr{G}^0$ and that $\mathscr G^0$ is solvable. Since $K$ is a Picard-Vessiot extension of $F^0$ for the  $\delta-{F^0}-$module $(A\otimes F^0)$ and the $\delta-$Galois group $\mathscr{G}(K|F^0)=\mathscr{G}^0$ is a connected solvable group, we obtain from the previous case that $A\otimes F^0$ is a split $F^0-$algebra.
	
	To prove the converse, we first note that it does no harm to assume that $A$ is a split $F-$algebra and that the $\delta-$Galois group $\mathscr G$ of the $\delta-F-$module $A$ is connected. Suppose that there is chain of $\delta-$ideals of $A$ $$J_1\subsetneq J_2\subsetneq\cdots\subsetneq A$$ with $\mathrm{dim}_F(J_j)=jn.$ We shall now show that $\mathscr G$ is solvable. From Proposition \ref{bijectivecorrespondence-ideals}, there is a chain of $\mathscr{G}-$stable ideals of $\mathscr S (A):$ $$I_1\subsetneq I_2\subsetneq\cdots\subsetneq \mathscr S(A)$$
	with  $\mathrm{dim}_{F^\delta}(I_j)=jn.$ Now applying $\psi^{-1}_A$ and then $\Phi,$ we obtain $\mathscr H-$stable subspaces $$V_1\subsetneq V_2\subsetneq\cdots\subsetneq V_n=(F^{\delta})^n$$ with $\mathrm{dim}_{F^\delta}(V_j)=j.$ Thus $\mathscr H=\pi^{-1}(\varphi_A(\mathscr{G}))$ is solvable  and therefore, $\mathscr G$ is solvable.
\end{proof}

\begin{remark} If $A$ is a $\delta-F-$central division algebra then since $A$ has no nontrivial right ideals and therefore by Theorem \ref{Mainresult}(\ref{reductivecase}) the $\delta-$Galois group of the $\delta-F-$module $A$ is a reductive linear algebraic group.
	\end{remark}

\section{Derivations stabilizing maximal subfields}\label{maxsubfield-stability}

 Let $A$ be a $\delta-F-$central simple algebra having a maximal subfield $E$  stabilized by $\delta_A,$ that is  $\delta_A(E)\subseteq E$. 
 When $F^{\delta}$ is an algebraically closed field, we show in Theorem \ref{derivation-maximalsubfields} of this section  that the identity component of the differential Galois group of the $\delta-F-$module $A$ is a torus. To accomplish this, we need the following facts from the theory of  liouvillian Picard-Vessiot extensions.
 
\subsection{Liouvillain field extensions} A $\delta-$field extension $K$ of $F$ is called a \emph{liouvillian extension} of $F$ if $K=F(t_1,\dots, t_n)$ such that for each $i=1,2,\dots,n,$ either \begin{enumerate}[(i)] \item \label{algebraic}$t_i$ is  algebraic over $F(t_1,\dots, t_{i-1})$ or \item\label{integral} $\delta_K(t_i)\in F(t_1,\dots, t_{i-1})$ (that is,  $t_i$ is an \emph{antiderivative} of an element of $F(t_1,\dots, t_{i-1})$) or \item\label{exponential} $t_i\neq 0$ and $\delta_K(t_i)/t_i\in F(t_1,\dots, t_{i-1})$ (that is, $t_i$ is an \emph{exponential of an antiderivative} of $F(t_1,\dots, t_{i-1})$). \end{enumerate} 
A liouvillian extension where the elements $t_1,\dots, t_n$ are of type (\ref{algebraic}) or (\ref{integral}) (respectively (\ref{algebraic}) or (\ref{exponential})) is called a \emph{primitive extension} (respectively, an \emph{exponential extension}).


Let $K$ be a Picard-Vessiot extension of $F$ for some $\delta-F-$module $M.$ If the identity component of the  $\delta-$Galois group $\mathscr G$ of $M$ is a solvable algebraic group, then we have 
$K=F^0(\xi_1,\dots, \xi_l)(\eta_1,\dots, \eta_n),$ where  $F^0$ is the algebraic  closure of $F$ in $K$ and for each $i,$ $1\leq i \leq l,$  $\delta_K(\xi_i)/\xi _i\in F^0,$  $\delta_K(\eta_1)\in F^0(\xi_1,\dots, \xi_l)$ and for each $i,$ $2\leq i\leq n,$ $\delta_K(\eta_i)\in F^0(\xi_1,\dots, \xi_l)(\eta_1,\dots, \eta_{i-1}).$  If $\mathscr U$ denotes the unipotent radical of $\mathscr{G}$ then the $\delta-$field fixed by $\mathscr U$ $$K^\mathscr U:= \{x\in K\ | \ \sigma(x)=x, \ \text{for all} \ \sigma\in \mathscr U\}$$  is $F^0(\xi_1,\dots, \xi_l).$  
In fact, $K^\mathscr U=F^0(\xi_1,\dots, \xi_l)$ is a Picard-Vessiot extension of a $\delta-F-$module $N\in \{\{M\}\}$ and the $\delta-$Galois group of $N$ is isomorphic to the torus $\mathscr{G}/\mathscr{U}.$ Thus, in particular, if the identity component of $\mathscr G$  is a torus, then $K= F^0(\xi_1,\dots, \xi_l)$ where each $\delta_K(\xi_i)/\xi_i\in F^0$  and if the identity component of $\mathscr G$ is a unipotent algebraic group then $K=F^0(\eta_1,\dots,\eta_n),$ where $\delta_K(\eta_1)\in F^0$ and for $2\leq i\leq n,$ $\delta_K(\eta_i)\in F^0(\eta_1,\dots, \eta_{i-1})$ (for details see \cite[Proposition 6.7, Theorem 6.8]{Mag-94}).  If $K$ is contained in an exponential extension (respectively, a primitive extension) of $F$ then $K$ itself is an exponential extension (respectively, a primitive extension) of $F$ and $\mathscr G^0$ is a torus (respectively, unipotent algebraic group).

\begin{theorem}\label{derivation-maximalsubfields}
	Let $A$ be a central simple $F-$algebra having a maximal subfield $E.$ Let $\delta_A$ be a derivation on $A$ such that $F^{\delta}$ is algebraically closed and  $\delta_A(E)\subseteq E.$ Then  the identity component of the $\delta-$Galois group of the $\delta-F-$module $A$ is a torus or equivalently, the Picard-Vessiot extension $K$ of $F$ for the $\delta-F-$module $A$ is an exponential extension: $K=F^0(\xi_1,\dots, \xi_m),$ where $F^0$ is the algebraic closure of $F$ in $K$ and for each $i=1,\dots, m,$ $\xi_i\neq 0$ and $\delta_K(\xi_i)/\xi_i\in F^0.$
	\end{theorem}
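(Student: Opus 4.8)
The plan is to produce a $\delta-$field extension $L$ of $F$ with $L^\delta=F^\delta$ that is an exponential extension of $F$ and that splits the $\delta-F-$algebra $A$. Granting this, Theorem \ref{Mainresult}(\ref{existence-minimal}) gives a $\delta-F-$embedding $K\hookrightarrow L$, so the Picard-Vessiot extension $K$ of the $\delta-F-$module $A$ is contained in an exponential extension of $F$; the structure theory of liouvillian Picard-Vessiot extensions recalled above (cf. \cite[Proposition 6.7, Theorem 6.8]{Mag-94}) then forces $K$ itself to be an exponential extension of $F$ and $\mathscr{G}(K|F)^0$ to be a torus, which is exactly the assertion of the theorem.

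First I would pass to the Galois closure. Let $\hat{E}$ be the Galois closure of $E$ over $F$ inside a fixed algebraic closure of $F$, equipped with the unique derivation extending $\delta_F$. Since $F^\delta$ is algebraically closed and $\hat{E}/F$ is algebraic, $\hat{E}^\delta=F^\delta$, and, being algebraic over $F$, the field $\hat{E}$ is an exponential extension of $F$. As $E$ splits $A$, so does $\hat{E}$, so $A\otimes_F\hat{E}\cong\mathrm{M}_n(\hat{E})$ as $\hat{E}-$algebras; transporting the derivation $\delta_A\otimes\delta_{\hat{E}}$ across such an isomorphism and invoking \cite[Proposition 1]{JM08}, we may present the resulting $\delta-\hat{E}-$algebra as $(\mathrm{M}_n(\hat{E}),\delta^c+\mathrm{inn}_Q)$ for some $Q\in\mathrm{M}_n(\hat{E})$.

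The decisive point is to exploit the hypothesis $\delta_A(E)\subseteq E$. It gives $(\delta_A\otimes\delta_{\hat{E}})(E\otimes_F\hat{E})\subseteq E\otimes_F\hat{E}$, so the image $\mathcal{E}$ of the subalgebra $E\otimes_F\hat{E}\subseteq A\otimes_F\hat{E}$ is a $(\delta^c+\mathrm{inn}_Q)-$stable commutative $\hat{E}-$subalgebra of $\mathrm{M}_n(\hat{E})$ of $\hat{E}-$dimension $[E:F]=n$; moreover $E\otimes_F\hat{E}\cong\hat{E}^n$ as $\hat{E}-$algebras, because $\hat{E}$ contains the Galois closure of $E/F$, so $\mathcal{E}$ is a split étale subalgebra of $\mathrm{M}_n(\hat{E})$ of dimension $n$, hence conjugate to the subalgebra of diagonal matrices. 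Conjugating by a suitable $g\in\mathrm{GL}_n(\hat{E})$ — a gauge change replacing $(\mathrm{M}_n(\hat{E}),\delta^c+\mathrm{inn}_Q)$ by a $\delta-$isomorphic $(\mathrm{M}_n(\hat{E}),\delta^c+\mathrm{inn}_{Q'})$ whose center is again acted on by $\delta_{\hat{E}}$ — I may assume $\mathcal{E}$ is exactly the diagonal subalgebra. Since $\delta^c+\mathrm{inn}_{Q'}$ still stabilizes the diagonal matrices, a direct computation of $\mathrm{inn}_{Q'}$ applied to a diagonal matrix with pairwise distinct entries (possible, $\hat{E}$ being infinite) shows that $Q'=\mathrm{diag}(q_1,\dots,q_n)$ is itself diagonal.

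Finally, let $L$ be the Picard-Vessiot extension of $\hat{E}$ for the diagonal matrix differential equation $\delta^c(Y)=Q'Y$; then $L=\hat{E}(\xi_1,\dots,\xi_n)$ with $\delta_L(\xi_i)=q_i\xi_i$, so $L^\delta=\hat{E}^\delta=F^\delta$ and, since $\delta_L(\xi_i)/\xi_i=q_i\in\hat{E}$, the field $L$ is an exponential extension of $\hat{E}$, hence of $F$. Writing $Z:=\mathrm{diag}(\xi_1,\dots,\xi_n)\in\mathrm{GL}_n(L)$ we have $\delta^c(Z)=Q'Z$, and exactly as in Example \ref{pvminimality} the map $X\mapsto ZXZ^{-1}$ is a $\delta-L-$algebra isomorphism $(\mathrm{M}_n(L),\delta^c)\cong(\mathrm{M}_n(L),\delta^c+\mathrm{inn}_{Q'})$. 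Hence $L$ splits $(\mathrm{M}_n(\hat{E}),\delta^c+\mathrm{inn}_{Q'})\cong(A\otimes_F\hat{E},\delta_A\otimes\delta_{\hat{E}})$, and since $A\otimes_F L\cong(A\otimes_F\hat{E})\otimes_{\hat{E}}L$, the $\delta-$field $L$ splits the $\delta-F-$algebra $A$; applying Theorem \ref{Mainresult}(\ref{existence-minimal}) and the liouvillian structure theory as in the first paragraph completes the proof. The step I expect to be the main obstacle is the third one: arranging coordinates so that the $\delta_A-$stable maximal subfield becomes the diagonal subalgebra and then rigorously extracting that the perturbation matrix becomes diagonal, while checking that the gauge transformation is legitimate and introduces no new constants; the remaining steps are bookkeeping built on Theorem \ref{Mainresult}(\ref{existence-minimal}) and the liouvillian dictionary already recorded in this section.
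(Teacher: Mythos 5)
Your proposal is correct and follows essentially the same route as the paper's proof: pass to a Galois closure of the maximal subfield, diagonalize the image of $E\otimes\hat{E}$ so that stability of $E$ under $\delta_A$ forces the perturbation matrix to be diagonal, split by an exponential Picard--Vessiot extension of the diagonal equation, and conclude via Theorem \ref{Mainresult}(\ref{existence-minimal}) and the liouvillian structure theory. The step you flag as the main obstacle is handled in the paper exactly as you propose (conjugating the split \'etale image of $E\otimes\hat{E}$ onto the diagonal subalgebra and computing that $Q'$ is diagonal), so there is no gap.
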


\begin{proof}
It is well known that  maximal subfields of a central simple algebra are splitting fields. Therefore, $A\otimes E\cong \mathrm{M}_n(E)$ as $E-$algebras.  Let $L$ be a finite Galois extension of $F$ containing $E.$ Then, we have an isomorphism $\phi: A\otimes L\rightarrow \mathrm{M}_n(L)$ of $L-$algebras. Since $F$ is of characteristic zero, we shall choose $\alpha\in E$ such that $E=F(\alpha).$ If $G(L|F)=\{\sigma_1,\dots, \sigma_n\}$ is the (ordinary) Galois group of $L$ over $F$ then we have the following isomorphism of $L-$algebras: $$E\otimes L\cong L^n;\quad \alpha\otimes l\mapsto (\sigma_1(\alpha)l, \dots, \sigma_n(\alpha)l),\quad 1\otimes l\mapsto l.$$  
 Thus $\phi(E\otimes L)$ is isomorphic to the $L-$algebra $L^n,$ which in turn is isomorphic to the $L-$algebra $\mathfrak{D}$ of all diagonal matrices in $\mathrm{M}_n(L).$  We may therefore assume $\phi$ maps $E\otimes L$ onto $\mathfrak{D}$ (\cite[Lemma 2.2.9]{GSz}).  Transporting  the derivation $\delta_A \otimes \delta_L$ on $A\otimes L$ through $\phi$ to $\mathrm{M}_n(L)$, we have a derivation $D$ on $\mathrm{M}_n(L)$ that extends the derivation on $L$ and also stabilizes $\mathfrak{D}.$ Since $\delta^c$ also extends the derivation on $L,$  there is a matrix $P=(p_{ij})\in \mathrm{M}_n(L)$ such that $$D=\delta^c+\mathrm{inn}_P.$$
 We shall now show that $P$ must be a diagonal matrix.   For any $i, 1\leq i\leq n,$ $D(\mathtt{E}_{ii})=\mathtt{E}_{ii}P-P\mathtt{E}_{ii}=\sum_k \mathtt{E}_{ik}p_{ik}-\sum_l \mathtt{E}_{li}p_{li}$ and since $D(\mathtt{E}_{ii})\in \mathfrak D,$ it follows that $p_{ik}=0$ whenever $k\neq i$. Thus $P$ is diagonal. 
 
 Let $M$ be an $n-$dimensional $\delta-L-$module  with  $\delta_M(e_i)=-\sum^n_{j=1}p_{ji}e_i,$ where $e_1,\dots, e_n$ is an $L-$basis of $M.$ Let  $\mathcal L$ be a Picard-Vessiot extension of $L$ for $M$ and $(f_{ij})\in \mathrm{GL}_n(\mathcal L)$ be a fundamental matrix with  $\delta^c(f_{ij})=P(f_{ij}).$ Then for each $i=1,\dots, n,$ there is a nonzero entry $f_i$ in the $i-$th row of $(f_{ij})$ such that $\delta_\mathcal L(f_i)=p_{ii}f_i.$ Moreover, any two entries of the same row of $(f_{ij})$ are constant multiplies of each other. Thus, $\mathcal L=L(f_1,\dots ,f_n)$ is an exponential extension of $L.$ 
 
 It can be seen that the conjugation map $X\mapsto (f_{ij})X(f_{ij})^{-1}$ is a $\delta-\mathcal L-$algebra isomorphism between $(\mathrm{M}_n(\mathcal L), \delta^c)$ and $(\mathrm{M}_n(\mathcal L), D)$  and thus $\mathcal L$ splits the $\delta-F-$algebra $A.$  Note that  $\mathcal L$ has $F^{\delta}$ as its field of constants as $$\mathcal L^{\delta}=L^{\delta}=F^{\delta}.$$ Now, since the exponential extension $\mathcal L$ of $F$ splits the $\delta-F-$algebra $A,$ by Theorem \ref{Mainresult}(\ref{existence-minimal}),
 the  Picard Vessiot extension $K$ of $F$ for the $\delta-F-$module $A$ can be embedded in the $\delta-$field $\mathcal L.$ Therefore, as noted earlier, $K$ must be an exponential extension of $F$ and the identity component of the $\delta-$Galois group of $A$ must be a torus.\end{proof}
 
\begin{remarks*}
	
\item	\begin{enumerate}
		\item  From the proof of the Theorem \ref{derivation-maximalsubfields}, we observe that $\mathcal L:=F(f_1,\dots, f_n)$ splits the $\delta-F-$algebra $A.$ Therefore, it follows that $$\mathrm{deg}^\delta_{sp}(A)\leq n=\sqrt{\mathrm{dim}_F(A)}.$$  This generalizes the existing results on finding bounds for $\mathrm{deg}^\delta_{sp}(A),$ when $A$ is a differential quaternion algebra or a differential symbol algebra  having a differential maximal subfield (see \cite{AKVRS, AKG}) to an arbitrary $\delta-F-$central simple algebra $A$ having a differential maximal subfield.
	
	 \item  It is worthy to observe that any central division $F-$algebra $A$ has a maximal subfield $K$ and that every derivation on $F$ can be extended to a derivation $\delta_A$ on $A$ such that $\delta_A(K)\subseteq K.$ Thus, if $F$ is a $\delta-$field with $F^{\delta}$ algebraically closed\footnote{For example, take $F=\mathbb C(t,e^t),$ the rational function field in two variables over the complex numbers, with the derivation $d/dt.$} then  every central division $F-$algebra $A$ has a derivation $\delta_A$ such that the identity component of the $\delta-$Galois group of the $\delta-F-$module $A$ is solvable.
\end{enumerate}	
\end{remarks*}

\subsection{Groups with trivial torsor.} \label{groupswithtrivialtorsor} In Theorem \ref{Mainresult} (\ref{liouvilliancase}), we saw that for a $\delta-F-$central simple algebra $A$ having a $\delta-$Galois group whose identity component is solvable, the algebraic closure $F^0$  of $F$ in the Picard-Vessiot extension of the $\delta-F-$module $A$ splits the $F-$algebra $A.$   As we shall see in Proposition \ref{criteria-torsor}, this phenomenon of  $F^0$ splitting the central simple algebra $A$  is in fact true for any  $\delta-F-$central simple algebra that has a $\delta-$Galois group  whose identity component has only  trivial torsors.  Theorem \ref{Mainresult}(\ref{trivialtorsor}) is an easy consequence of Proposition \ref{criteria-torsor}.

Let $K$ be a Picard-Vessiot extension of $F$ for  a  $\delta-F-$module $M,$ $\mathscr R$ be its Picard-Vessiot ring and $\mathscr G$ be the $\delta-$Galois group of $M$ then $\mathrm{maxspec}(\mathscr R)$  is a $\mathscr G-$torsor over $F.$  If this torsor happens to be trivial, that is, there is an $F-$algebra homomorphism from $\mathscr R\to F$ then there is a $\mathscr G-$equivariant $F-$isomorphism between the Picard-Vessiot extension $E$ and the field of fractions of $F\otimes_{F^{\delta}} F^{\delta}[\mathscr G],$ where $F^{\delta}[\mathscr G]$ is the coordinate ring of the algebraic group $\mathscr{G}.$  Thus, in particular, if $\mathscr G$ is connected, then since $\mathscr G$ is a rational variety, $E$ becomes a purely transcendental extension of $F$ (see \cite[Theorem 5.19]{Mag-94}).

\begin{proposition}\label{criteria-torsor}
	Let $A$ be a $\delta-F-$central simple algebra, $F^{\delta}$ be an algebraically closed field, $K$ be a Picard-Vessiot extension of $F$ for the $\delta-F-$module $A$, $F^0$ be the algebraic closure of $F$ in $K$ and $\mathscr G$ be the $\delta-$Galois group of the $\delta-F-$module $M.$ If the Picard-Vessiot ring $\mathscr R$ of $K$ is a trivial $\mathscr G^0-$torsor over $F^0$ then $A\otimes F^0$ is a split $F^0-$algebra.
\end{proposition}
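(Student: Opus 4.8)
The plan is to exhibit $K$ as a splitting field of the central simple $F^0-$algebra $A\otimes_F F^0$, to show that under the torsor hypothesis this splitting field is purely transcendental over $F^0$, and then to kill the Brauer class of $A\otimes_F F^0$ using the injectivity of $\mathrm{Br}(k)\to\mathrm{Br}(k(t))$.

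First I would set up the descent. By the Galois correspondence recalled in Section \ref{pvtheory}, $F^0=K^{\mathscr G^0}$ and $K$ is itself a Picard-Vessiot extension of $F^0$ for the $\delta-F^0-$module $A\otimes_F F^0$, with connected $\delta-$Galois group $\mathscr G^0$. Next, applying Theorem \ref{Mainresult}(\ref{existence-minimal}) with $L=K$ (or, equivalently, Corollary \ref{trivial-split} over the base field $K$, whose field of constants $K^{\delta}=F^{\delta}$ is algebraically closed), the $\delta-$field $K$ splits $A$, so $A\otimes_F K\cong\mathrm{M}_n(K)$ as $K-$algebras. Tensoring first over $F^0$ and then over $K$ gives $(A\otimes_F F^0)\otimes_{F^0}K\cong\mathrm{M}_n(K)$; hence the field extension $K$ of $F^0$ splits the central simple $F^0-$algebra $A\otimes_F F^0$, i.e. its class in $\mathrm{Br}(F^0)$ dies in $\mathrm{Br}(K)$.

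The key step is to use the hypothesis that the Picard-Vessiot ring of $K$ over $F^0$ is a trivial $\mathscr G^0-$torsor over $F^0$ to conclude that $K$ is \emph{purely transcendental} over $F^0$. This is precisely the argument recalled just before the statement: an $F^0-$point of the torsor produces a $\mathscr G^0-$equivariant $F^0-$isomorphism of $K$ with $\mathrm{Frac}\!\left(F^0\otimes_{F^{\delta}}F^{\delta}[\mathscr G^0]\right)$; since $\mathscr G^0$ is connected and $F^{\delta}$ is algebraically closed, $\mathscr G^0$ is a rational $F^{\delta}-$variety, so $F^{\delta}[\mathscr G^0]$ has purely transcendental fraction field over $F^{\delta}$, and therefore $K\cong F^0(t_1,\dots,t_m)$ with $m=\dim\mathscr G^0$ (see \cite[Theorem 5.19]{Mag-94}). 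Finally I would invoke the standard fact that for every field $k$ the restriction map $\mathrm{Br}(k)\to\mathrm{Br}(k(t))$ is injective (it is split by specialization at a $k-$rational point; see \cite{GSz}), iterate it $m$ times to get injectivity of $\mathrm{Br}(F^0)\to\mathrm{Br}\bigl(F^0(t_1,\dots,t_m)\bigr)=\mathrm{Br}(K)$, and conclude that the class of $A\otimes_F F^0$ is trivial in $\mathrm{Br}(F^0)$. A dimension count ($\dim_{F^0}(A\otimes_F F^0)=n^2$) then yields $A\otimes_F F^0\cong\mathrm{M}_n(F^0)$.

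The main obstacle is the passage from \emph{trivial torsor} to \emph{purely transcendental extension}: one must check that $F^0\otimes_{F^{\delta}}F^{\delta}[\mathscr G^0]$ is an integral domain and that its fraction field is genuinely $F^0(t_1,\dots,t_m)$, which is where connectedness of $\mathscr G^0$ and algebraic closedness of $F^{\delta}$ (giving geometric integrality and geometric rationality of $\mathscr G^0$, both preserved under the base change to $F^0$) are used. Everything else is bookkeeping: identifying the Picard-Vessiot data of $K$ over $F^0$, and the cited Brauer-group injectivity.
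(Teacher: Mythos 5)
Your proposal is correct and follows essentially the same route as the paper: identify $K$ as a Picard-Vessiot extension of $F^0$ for $A\otimes F^0$ with connected group $\mathscr G^0$, use the trivial-torsor hypothesis together with the rationality of connected linear algebraic groups over $F^{\delta}$ to conclude that $K$ is purely transcendental over $F^0$, and then invoke the classical fact that a central simple algebra split by a purely transcendental extension is already split. The only (cosmetic) difference is in the citation for that last fact: the paper argues via rational points of the Severi-Brauer variety $\mathrm{SB}(A\otimes F^0)$ (\cite[Theorems 3.8.6, 3.8.7]{Jac-96}), whereas you use the injectivity of $\mathrm{Br}(F^0)\to \mathrm{Br}(F^0(t_1,\dots,t_m))$ --- two standard formulations of the same statement.
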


\begin{proof}
	The $\delta-F^0-$module $(A\otimes F^0, \delta_A\otimes \delta_{F^0})$ is split by the Picard-Vessiot extension $E$ and the $\delta-$Galois group $\mathscr{G}(E|F^0)=\mathscr G^0$ is a connected group. If the Picard-Vessiot ring  $\mathscr R$ of $K$ is a trivial $\mathscr G^0-$torsor over $F^0$ then as noted earlier, $E$ is a purely transcendental extension of $F^0$ which also splits the algebra $A\otimes F^0.$ Then, the Severi-Brauer variety $\mathrm{SB}(A\otimes F^0)$ associated to $A\otimes F^0$ has  a point in the purely transcendental extension $E$ of $F^0.$ This implies, $\mathrm{SB}(A\otimes F^0)$ must already be having a point in $F^0$ and thus $A\otimes F^0$ must be a split $F^0-$algebra (see \cite[Theorems 3.8.6, 3.8.7]{Jac-96}).
\end{proof}

\begin{remark}
Let $A$ be a $\delta-F-$central simple algebra with $F^{\delta}$ an algebraically closed field. Since the algebraic groups $\mathrm{GL}_n(F^{\delta}),$ $\mathrm{SL}_n(F^{\delta})$ and  connected solvable groups over $F^{\delta}$ have only trivial torsors, the $\delta-$Galois groups of the $\delta-F-$module $A$ cannot be isomorphic to any of these connected groups unless $A$ is a split $F-$algebra.
\end{remark}

\bibliographystyle{alpha}
\bibliography{MMVRS}
	\end{document}